\newtheorem{theorem}{Theorem}[section]
\newtheorem*{theorem*}{Theorem}
\newtheorem{lemma}[theorem]{Lemma}
\newtheorem{proposition}[theorem]{Proposition}
\newtheorem{corollary}[theorem]{Corollary}
\newtheorem*{conjecture*}{Conjecture}
\newtheorem{thm}[theorem]{Theorem}
\newtheorem{lem}[theorem]{Lemma}
\newtheorem{cor}[theorem]{Corollary}
\newtheorem{df}[theorem]{Definition}
\newcommand{\ie}{{\em i.e.}\ }
\newcommand{\opname}[1]{\operatorname{\mathsf{#1}}}
\renewcommand{\mod}{\opname{mod}\nolimits}
\newcommand{\add}{\opname{add}\nolimits}
\newcommand{\Z}{\mathbb{Z}}
\newcommand{\D}{\mathbb{D}}
\renewcommand{\P}{\mathbb{P}}
\newcommand{\ra}{\rightarrow}
\newcommand{\Hom}{\opname{Hom}}
\newcommand{\Ext}{\opname{Ext}}
\newcommand{\End}{\opname{End}}
\newcommand{\pr}{\opname{pr}}
\newcommand{\mc}{\mathcal{C}}
\newcommand{\mt}{\mathcal{T}}
\newcommand{\homt}{\Hom_{\mathcal{T}}}
\newcommand{\homg}{\Hom_{\Gamma}}
\newcommand{\xra}{\xrightarrow}
\newcommand{\ga}{\Gamma}
\renewcommand{\hat}[1]{\widehat{#1}}
\begin{document}

\title{Relative rigid objects in triangulated categories}\thanks{Partially supported the National Natural Science Foundation of China (Grant No. 11471224)}

\author[Fu]{Changjian Fu}
\address{Changjian Fu\\Department of Mathematics\\SiChuan University\\610064 Chengdu\\P.R.China}
\email{changjianfu@scu.edu.cn}
\date{\today}
\author[Geng]{Shengfei Geng}
\address{Shengfei Geng\\Department of Mathematics\\SiChuan University\\610064 Chengdu\\P.R.China}
\email{genshengfei@scu.edu.cn}
\author[Liu]{Pin Liu}
\address{Pin Liu\\Department of Mathematics\\
   Southwest Jiaotong University\\
  610031 Chengdu \\
  P.R.China}
  \email{pinliu@swjtu.edu.cn}
\subjclass[2010]{18E30, 16G10}
\keywords{R[1]-rigid objects, support $\tau$-tilting module, silting object, cluster tilting object. }
\maketitle

\begin{abstract}
Let $\mt$ be a Krull-Schmidt, Hom-finite triangulated category with suspension functor $[1]$. Let $R$ be a basic rigid object, $\Gamma$ the endomorphism algebra of $R$, and $\pr(R)\subseteq \mt$ the subcategory of objects finitely presented by $R$. We investigate the relative rigid objects, \ie $R[1]$-rigid objects of $\mt$. Our main results show that the $R[1]$-rigid objects in $\pr(R)$ are in bijection with $\tau$-rigid $\Gamma$-modules, and the maximal $R[1]$-rigid objects with respect to $\opname{pr}(R)$ are in bijection with support $\tau$-tilting $\Gamma$-modules. We also show that various previously known bijections involving support $\tau$-tilting modules are recovered under respective assumptions.
\end{abstract}

\section{Introduction}
This note attempts to unify and generalize certain bijections involving support $\tau$-tilting modules.
Support $\tau$-tilting module is the central notion in the $\tau$-tilting theory introduced by Adachi-Iyama-Reiten~\cite{AIR}, which can be  regarded as a generalization of the classical tilting  module. Since its appearance, support $\tau$-tilting module has been rapidly found to be linked up with various important objects in representation theory, such as torsion class, (co)-$t$-structure, cluster tilting object, silting object and so on, see~\cite{AIR,IJY,CZZ,LX,YZ,YZZ} for instance. Among others, Adachi-Iyama-Reiten~\cite{AIR} proved that for a $2$-Calabi-Yau triangulated category $\mt$ with a basic cluster tilting object $T$, there is a one-to-one correspondence between the set of basic cluster tilting objects of $\mt$ and the set of basic support $\tau$-tilting $\End_\mt(T)$-modules. It is known that there exist  $2$-Calabi-Yau triangulated categories which have no cluster tilting objects but only  maximal rigid objects. Then the correspondence was generalized to such kind of 2-Calabi-Yau triangulated categories by Chang-Zhang-Zhu~\cite{CZZ} and Liu-Xie~\cite{LX}.  The Adachi-Iyama-Reiten's correspondence has been further generalized by Yang-Zhu ~\cite{YZ}. More precisely, let $\mt$ be a Krull-Schmidt, Hom-finite triangulated category with suspension functor $[1]$.
Assume that $\mt$ admits a Serre functor and a cluster tilting object $T$. By introducing the notion of $T[1]$-cluster tilting objects as a generalization of cluster tilting objects, Yang-Zhu~\cite{YZ}  established a one-to-one correspondence between the set of $T[1]$-cluster tilting objects of $\mt$ and the set of support $\tau$-tilting modules over $\End_\mt(T)$. On the other hand, for a Krull-Schmidt, Hom-finite  triangulated category $\mt$ with a silting object $S$, Iyama-J{\o}rgensen-Yang~\cite{IJY} proved that the two-term silting objects of $\mt$ with respect to $S$ which belong to the finitely presented subcategory $\pr(S)$ are in bijection with  support $\tau$-tilting modules over $\End_\mt(S)$. 

   In this note,  we work in the following general setting. 
 Let $\mt$ be a Krull-Schmidt, Hom-finite triangulated category with suspension functor $[1]$ and $R$ a basic rigid object of $\mt$ with endomorphism algebra $\Gamma=\End_\mt(R)$. Denote by $\opname{pr}(R)$ the subcategory of objects finitely presented by $R$. Following~\cite{YZ}, we introduce the $R[1]$-rigid object of $\mt$ and the maximal $R[1]$-rigid object with respect to $\opname{pr}(R)$ ({\it cf.}~Definition~\ref{d:definition}).  We prove that the $R[1]$-rigid objects in $\pr(R)$ are in bijection with $\tau$-rigid $\Gamma$-modules, and the maximal $R[1]$-rigid objects with respect to $\opname{pr}(R)$ are in bijection with support $\tau$-tilting $\Gamma$-modules ({\it cf.}  Theorem~\ref{t:thm1}). When $R$ is a cluster tilting object of $\mt$, we show that the bijection reduces to the bijection between the set of basic $R[1]$-cluster tilting objects of $\mt$ and the set of basic support $\tau$-tilting $\Gamma$-modules obtained by Yang-Zhu~\cite{YZ} (Corollary~\ref{c:YZ}). We remark that, compare to ~\cite{YZ}, we do not need the existence of a Serre functor for $\mt$ ({\it cf.} also~\cite{YZZ}). Since tilting modules are  faithful support $\tau$-tilting modules, we also obtain a characterization of tilting $\Gamma$-modules via the bijection ({\it cf.} Theorem~\ref{t:thm2}).

We apply the aforementioned bijection to the cases of  silting objects, $d$-cluster tilting objects and maximal rigid objects in Section~\ref{S:silting} and Section~\ref{S:d-rigid} respectively. 
 When $R$ is a silting object of a triangulated category $\mt$,  we proved that the maximal $R[1]$-rigid objects with respect to $\opname{pr}(R)$ coincide with the silting objects of $\mt$ in $\opname{pr}(R)$ ({\it cf.}  Theorem~\ref{t:case d=infinite}). As a consequence, Theorem~\ref{t:thm1} recovers the bijection between the set of basic silting objects of $\mt$ in $\opname{pr}(R)$ and the set of basic support $\tau$-tilting $\End(R)$-modules obtained by Iyama-J{\o}rgensen-Yang~\cite{IJY} ({\it cf.}  Corollary~\ref{c:IJY}). If $\mt$ is a $d(\geq 2)$-cluster category and $R$ is a $d$-cluster tilting object of $\mt$, then Theorem~\ref{t:thm1} reduces to the bijection obtained by Liu-Qiu-Xie~\cite{LQX} ({\it cf.}  Corollary~\ref{c:LQX}).
 Assume that $\mt$ is a $2$-Calabi-Yau triangulated category and $R$ is a basic maximal rigid object of $\mt$. We show that Theorem~\ref{t:thm1} implies the bijection between the set of basic maximal rigid object of $\mt$ and the set of basic support $\tau$-tilting $\Gamma$-modules obtained in~\cite{LX,CZZ} ({\it cf.}  Corollary~\ref{c:CZZ-LX}). 

\subsection*{Convention} Let $k$ be an algebraically closed field.  Throughout this paper, $\mt$ will be a Krull-Schmidt, Hom-finite triangulated category over $k$ unless stated otherwise. For an object $M\in \mt$, denote by $|M|$ the number of non-isomorphic indecomposable direct summands of $M$. Denote by $\add M$ the subcategory of $\mt$ consisting of objects which are finite direct sum of direct summands of $M$.

\section{R[1]-rigid objects and $\tau$-rigid modules }~\label{S:basic-notions}

\subsection{Recollection on $\tau$-tilting theory}
We follow~\cite{AIR}. Let $A$ be a finite dimensional algebra over $k$. Denote by $\mod A$ the category of finitely generated right $A$-modules and $\opname{proj} A$ the category of finitely generated right projective $A$-modules. For a module $M\in \mod A$, denote by $|M|$ the number of non-isomorphic indecomposable direct summands of $M$.
 Let $\tau$ be the Auslander-Reiten translation of $\mod A$.

 An $A$-module $M$ is {\it $\tau$-rigid} if $\Hom_A(M,\tau M)=0$.
 A {\it $\tau$-rigid pair} is a pair of $A$-modules $(M,P)$ with $M \in\mod A$ and $P \in \opname{proj} A$, such that $M$ is $\tau$-rigid and $\Hom_A(P,M) = 0$.
A basic $\tau$-rigid pair $(M,P)$ is a  {\it basic support $\tau$-tilting pair} if $|M|+|P| = |A|$. In this case, $M$ is a {\it support $\tau$-tilting} $A$-module and $P$ is uniquely determined by $M$.  It has been proved in~\cite{AIR} that for each $\tau$-rigid pair $(M,P)$, we always have $|M|+|P|\leq |A|$ and 
each $\tau$-rigid pair can be completed into a support $\tau$-tilting pair.

The following criterion for $\tau$-rigid modules has been proved in~\cite{AIR}.
\begin{lem}~\label{l:tau-rigid}
For $M\in \mod A$, denote by $P_1^M\xra{f} P_0^M\ra M\ra 0$ a minimal projective presentation of $M$. Then $M$ is $\tau$-rigid if and only if \[\Hom_{A}(f,M):\Hom_{A}(P_0^M,M)\ra \Hom_{A}(P_1^M,M)\] is surjective.
\end{lem}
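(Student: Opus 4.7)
The plan is to reduce the rigidity condition $\Hom_A(M,\tau M)=0$ to the surjectivity of $\Hom_A(f,M)$ via the standard formula
\[
\Hom_A(M,\tau M)\cong D\,\cok\bigl(\Hom_A(f,M)\bigr),
\]
where $D=\Hom_k(-,k)$. Once this isomorphism is established, the lemma is immediate: vanishing of $\Hom_A(M,\tau M)$ is equivalent to vanishing of the cokernel, which is equivalent to surjectivity of $\Hom_A(f,M)$.

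To produce the formula, I would first recall the construction of $\tau$ via the Auslander--Bridger transpose. Applying $(-)^\ast=\Hom_A(-,A)$ to the minimal projective presentation $P_1^M\xra{f}P_0^M\ra M\ra 0$ yields a right exact sequence of left $A$-modules
\[
P_0^{M\,\ast}\xra{f^\ast}P_1^{M\,\ast}\ra \mathrm{Tr}\,M\ra 0,
\]
and by definition $\tau M=D\,\mathrm{Tr}\,M$. Next I would tensor this presentation on the right with $M$ over $A$; by right exactness of $-\otimes_A M$ this produces the exact sequence
\[
P_0^{M\,\ast}\otimes_A M\ra P_1^{M\,\ast}\otimes_A M\ra \mathrm{Tr}\,M\otimes_A M\ra 0.
\]

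Now I invoke the natural evaluation isomorphism $\Hom_A(P,A)\otimes_A M\iso \Hom_A(P,M)$, valid for finitely generated projective $P$; under it the displayed sequence becomes
\[
\Hom_A(P_0^M,M)\xra{\Hom_A(f,M)}\Hom_A(P_1^M,M)\ra \mathrm{Tr}\,M\otimes_A M\ra 0,
\]
so $\mathrm{Tr}\,M\otimes_A M\cong \cok\Hom_A(f,M)$. Finally, the Hom--tensor duality $\Hom_A(M,D\,N)\cong D(N\otimes_A M)$ (applied to $N=\mathrm{Tr}\,M$, viewing $\mathrm{Tr}\,M$ as a left $A$-module) gives
\[
\Hom_A(M,\tau M)=\Hom_A(M,D\,\mathrm{Tr}\,M)\cong D\bigl(\mathrm{Tr}\,M\otimes_A M\bigr)\cong D\,\cok\Hom_A(f,M),
\]
from which the equivalence in the lemma is read off directly.

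The only subtlety, which I expect to be the main bookkeeping obstacle, is keeping track of the left/right module structures and checking that the evaluation map $P^\ast\otimes_A M\to \Hom_A(P,M)$ and the duality $\Hom_A(M,D\,N)\cong D(N\otimes_A M)$ are natural and compatible with the morphism $f^\ast$; these are standard facts that do not depend on minimality of the presentation, so the minimality hypothesis is used only insofar as it fixes $f$ up to isomorphism (and is in fact not strictly required for the statement, since any projective presentation yields the same cokernel up to isomorphism).
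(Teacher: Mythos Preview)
The paper does not actually prove this lemma; it simply records it as a known fact and cites \cite{AIR}. Your argument is the standard one (and is essentially what underlies the result in \cite{AIR}): reduce $\Hom_A(M,\tau M)$ to $D\cok\Hom_A(f,M)$ via the transpose and Hom--tensor duality. So there is nothing to compare, and your proposal is fine.

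One small bookkeeping point, which you already anticipate: since $M$ is a right $A$-module, $P^\ast=\Hom_A(P,A)$ and $\mathrm{Tr}\,M$ are left $A$-modules, so the tensor products should be written as $M\otimes_A P^\ast$ and $M\otimes_A \mathrm{Tr}\,M$ rather than $P^\ast\otimes_A M$ and $\mathrm{Tr}\,M\otimes_A M$; the evaluation isomorphism is $M\otimes_A P^\ast\iso \Hom_A(P,M)$, $m\otimes\phi\mapsto (p\mapsto m\cdot\phi(p))$, and the duality reads $\Hom_A(M,DN)\cong D(M\otimes_A N)$ for $N$ a left module. With these orderings fixed, every step goes through exactly as you describe, and you are also right that minimality of the presentation is not needed.
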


\subsection{R[1]-rigid objects}   
Let $\mt$ be a Krull-Schmidt, Hom-finite triangulated category with shift functor $[1]$.  For $X, Y, Z\in \mt$, we denote by $Z(X,Y)$ the subgroup of $\Hom_\mt(X,Y)$ consisting of morphisms which factor through $\add Z$.
 An object $X\in \mt $ is called {\it rigid} if $\homt(X,X[1])=0$. It is {\it maximal rigid} if it is rigid and $\homt(X\oplus Z,X[1]\oplus Z[1])=0$ implies $Z\in \add X$ for any $Z\in \mt$. 
 Let $\mc\subseteq \mt$ be a full subcategory of $\mt$. An object $X\in \mc\subseteq \mt$ is called {\it maximal rigid  with respect to $\mc$} provided that it is rigid and for any object $Z\in \mc$ such that $\homt(X\oplus Z, X[1]\oplus Z[1])=0$, we have $Z\in \add X$. It is clear that a maximal rigid object of $\mt$ is just a maximal rigid object with respect to $\mt$.
 
 Let $R$ be a basic rigid object of $\mt$. An object $X$ is {\it finitely presented} by $R$ if there is a triangle $R_1^X\to R_0^X\to X\to R_1^X[1]$ with $R_0^X,R_1^X\in \add R$. Denote by $\opname{pr}(R)$ the subcategory of $\mt$ consisting of objects which are finitely presented by $R$.  Throughout this section, $R$ will be a basic rigid object of $\mt$.
We introduce the relative rigid objects with respect to $R$ ({\it cf.}~\cite{YZ,CZZ}).
\begin{df}~\label{d:definition}
Let $R\in\mt$ be a basic rigid object.
\begin{enumerate}
\item An object  $X\in\mt$ is called {\it $R[1]$-rigid} if $R[1](X,X[1])=0$.
\item An object $X\in\opname{pr}(R)\subseteq \mt$ is called {\it maximal $R[1]$-rigid with respect to $\opname{pr}(R)$} if  $X$ is $R[1]$-rigid and for any object $Z\in \opname{pr}(R)$ such that $R[1](X\oplus Z, X[1]\oplus Z[1])=0$, then $Z\in\add X$.
\end{enumerate}
\end{df}
By definition, it is clear that rigid objects are $R[1]$-rigid, but the converse is not true in general.
We are interested in $R[1]$-rigid objects of $\mt$ which belong to the subcategory $\opname{pr}(R)$. We have the following observation.
\begin{lem}~\label{l:R[1]-rigid}
 Let  $R_1\xra{f} R_0\xra{g} X\xra{} R_1[1]$ be a triangle in $\mt$ with $R_0,R_1\in\add R$. Then $X$ is $R[1]$-rigid if and only if  $$\Hom_{\mt}(f,X):\Hom_{\mt}(R_0,X)\ra \Hom_{\mt}(R_1,X)$$ is surjective.
\end{lem}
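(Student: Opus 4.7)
The plan is to apply $\Hom_{\mt}(-,X)$ to the triangle $R_1 \xrightarrow{f} R_0 \xrightarrow{g} X \xrightarrow{h} R_1[1]$ and read off the statement from the resulting long exact sequence, with the rigidity of $R$ used at exactly one point. Concretely, that sequence contains the piece
\[
\Hom_{\mt}(R_1[1],X) \xrightarrow{h^*} \Hom_{\mt}(X,X) \xrightarrow{g^*} \Hom_{\mt}(R_0,X) \xrightarrow{f^*} \Hom_{\mt}(R_1,X) \xrightarrow{\delta} \Hom_{\mt}(X[-1],X),
\]
and exactness at $\Hom_{\mt}(R_1,X)$ means that $f^*$ is surjective if and only if $\delta = 0$. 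Under the identification $\Hom_{\mt}(X[-1],X) \cong \Hom_{\mt}(X,X[1])$, the connecting map $\delta$ sends $\varphi\colon R_1\to X$ to $\varphi[1]\circ h\colon X\to X[1]$; in particular $\im \delta$ is precisely the image of $h^*\colon \Hom_{\mt}(R_1[1],X[1])\to \Hom_{\mt}(X,X[1])$, since every morphism $R_1[1]\to X[1]$ is of the form $\varphi[1]$.

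The heart of the argument is then to identify $\im\delta$ with $R[1](X,X[1])$. One inclusion is formal: any element in $\im\delta$ factors through $R_1[1] \in \add R[1]$. For the reverse inclusion, suppose $\beta\colon X\to X[1]$ admits a factorization $\beta = v\circ u$ with $u\colon X\to R'[1]$ and $v\colon R'[1]\to X[1]$ for some $R'\in \add R$. Consider the composition $u\circ g\colon R_0\to R'[1]$; since $R_0,R' \in \add R$ and $R$ is rigid, we have $\Hom_{\mt}(R_0, R'[1]) = 0$, so $u\circ g = 0$. Applying the long exact sequence for $\Hom_{\mt}(-, R'[1])$ (or equivalently the universal property of the triangle), $u$ factors through $h$, say $u = u' \circ h$ for some $u'\colon R_1[1]\to R'[1]$. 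Hence $\beta = (v\circ u')\circ h$ lies in the image of $h^* = \im\delta$.

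Combining the two steps: $X$ is $R[1]$-rigid iff $R[1](X,X[1]) = 0$ iff $\im \delta = 0$ iff $f^*$ is surjective. I expect the only step requiring genuine care is the identification $\im\delta = R[1](X,X[1])$, where the rigidity of $R$ enters to upgrade the a priori weaker condition ``factors through $R_1[1]$'' to the desired one ``factors through some object of $\add R[1]$''; everything else is bookkeeping on the long exact sequence.
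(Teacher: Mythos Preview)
Your proof is correct and follows essentially the same approach as the paper: both apply $\Hom_{\mt}(-,X)$ to the triangle and use the rigidity of $R$ to show that any morphism $X\to X[1]$ factoring through $\add R[1]$ in fact factors through the connecting map $h$. The only difference is organizational: you prove the single identity $\im\delta = R[1](X,X[1])$ and read off both directions at once, whereas the paper treats the two implications separately (for the converse, after factoring through $h$ it uses surjectivity of $f^*$ and $f\circ h[-1]=0$ directly rather than invoking $\delta=0$).
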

\begin{proof}
  Applying the functor $\homt(-,X)$ to the triangle $X[-1]\xra{h}R_1\xra{f} R_0\xra{g} X$ yields a long exact sequence
\[\homt(R_0,X)\xra{f^*}\homt(R_1,X)\xra{h^*}\homt(X[-1],X)\xra{} \homt(R_0[-1],X),\]
where $f^*=\homt(f,X)$ and $h^*=\homt(h,X)$.

Suppose that $X$ is $R[1]$-rigid, that is $R[1](X,X[1])=0$. It follows that $h^*=0$ and hence $f^*$ is surjective.

Now assume that $f^*$ is surjective. To show $X$ is $R[1]$-rigid, it suffices to prove that $R(X[-1], X)=0$.
  Let $a\in R(X[-1],X)$ and $a=b\circ c$, where $c: X[-1]\ra R$ and $b:R\ra X$. As $R$ is rigid, we know that each morphism from $X[-1]$ to $R$ factors through $h$. 
Hence there is a morphism $c':R_1\ra R$ such that $c=c'\circ h$.  Since $f^*$ is surjective, there is a morphism $b':R_0\ra X$ such that $b\circ c'=b'\circ f$. Then we have $a=b\circ c=b\circ c'\circ h=b'\circ f\circ h=0$ ({\it cf.} the following diagram).
\[
\xymatrix{
X[-1]\ar[r]^{h}\ar[d]^c&R_1\ar[r]^f\ar@{.>}[dl]_{c'}&R_0\ar[r]\ar@{.>}[dl]_{b'}&X\ar[r]&R_1[1]\\
R\ar[r]^b&X&&&
}
\]
\end{proof}

\subsection{From $R[1]$-rigid objects to $\tau$-rigid modules} Recall that $R$ is a basic rigid object of $\mt$.
Denote by $\Gamma:=\End_\mt(R)$ the endomorphism algebra of $R$ and $\mod \Gamma$ the category of finitely generated right $\Gamma$-modules. Let $\tau$ be the Auslander-Reiten translation of $\mod \Gamma$.
 It is known that the functor $\Hom_\mt(R,-): \mt\to \mod \Gamma$ induces an equivalence of categories
\begin{equation}~\label{f:equivalence}
\Hom_\mt(R,-): \opname{pr}(R)/(R[1])\to \mod \Gamma,
\end{equation}
where $\opname{pr}(R)/(R[1])$ is the additive quotient of $\opname{pr}(R)$ by morphisms factorizing through $\add(R[1])$ ({\it cf.} \cite{IY}). Moreover, the restriction of $\Hom_\mt(R,-)$ to the subcategory $\add R$ yields an equivalence between $\add R$ and the category $\opname{proj} \Gamma$ of finitely generated projective $\Gamma$-modules. 
The following result is a direct consequence of the equivalence ~(\ref{f:equivalence}) and the fact that $R$ is rigid.
\begin{lemma}~\label{l:morphism}
For any $R'\in \add R$ and $Z\in \opname{pr}(R)$, we have
\[\homg(\homt(R,R'),\homt(R,Z))\cong \Hom_{\mt}(R',Z).\]
\end{lemma}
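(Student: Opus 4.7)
The plan is to invoke the equivalence (\ref{f:equivalence}) and chase through what it says on $\Hom$-groups, then argue that passing to the quotient by $(R[1])$ costs nothing on the source side $R'\in\add R$ because $R$ is rigid.

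First, since (\ref{f:equivalence}) is an equivalence of categories, it is in particular fully faithful. Thus for any $R'\in \add R$ and $Z\in\pr(R)$,
\[
\Hom_{\pr(R)/(R[1])}(R',Z)\ \cong\ \Hom_{\Gamma}\bigl(\Hom_{\mt}(R,R'),\,\Hom_{\mt}(R,Z)\bigr).
\]
So it suffices to identify $\Hom_{\pr(R)/(R[1])}(R',Z)$ with $\Hom_{\mt}(R',Z)$, i.e.\ to show that the ideal $(R[1])(R',Z)$ of morphisms from $R'$ to $Z$ factoring through $\add(R[1])$ is zero.

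Next, take any $\varphi\in (R[1])(R',Z)$, so $\varphi=b\circ a$ with $a\colon R'\to R''[1]$ and $b\colon R''[1]\to Z$ for some $R''\in\add R$. Since $R',R''\in \add R$ and $R$ is rigid, $\Hom_{\mt}(R',R''[1])=0$, forcing $a=0$ and hence $\varphi=0$. Therefore $(R[1])(R',Z)=0$, which gives $\Hom_{\mt}(R',Z)=\Hom_{\pr(R)/(R[1])}(R',Z)$.

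Combining the two displays yields the claimed isomorphism. There is no real obstacle here: the whole content is that the source object $R'$ lies in $\add R$, where rigidity of $R$ kills all factorizations through $\add(R[1])$, so the quotient functor does not lose any morphisms out of $R'$.
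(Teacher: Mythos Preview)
Your argument is correct and is precisely the approach the paper intends: the text states only that the lemma ``is a direct consequence of the equivalence~(\ref{f:equivalence}) and the fact that $R$ is rigid,'' and you have simply spelled out those two ingredients---full faithfulness of the quotient functor together with $(R[1])(R',Z)=0$ from $\Hom_\mt(R',R''[1])=0$.
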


Now we are in position to state the main result of this note.
\begin{theorem}~\label{t:thm1}
\begin{itemize}
\item[$(a)$] Let $X$ be an object in $\pr(R)$ satisfying that $\add X\cap\add(R[1])=\{0\}$. Then $X$ is $R[1]$-rigid  if and only if $\Hom_{\mt}(R, X)$ is $\tau$-rigid.

\item[$(b)$] The functor $\Hom_\mt(R, -)$ yields a bijection between the  set of isomorphism classes of basic $R[1]$-rigid objects in $\opname{pr}(R)$ and the set of isomorphism classes of basic $\tau$-rigid pairs of $\Gamma$-modules. 

\item[$(c)$] The functor $\Hom_\mt(R, -)$ induces a bijection between the set of isomorphism classes of basic maximal $R[1]$-rigid objects with respect to $\opname{pr}(R)$ and the set of isomorphism classes of basic support $\tau$-tilting $\Gamma$-modules.
\end{itemize}
\end{theorem}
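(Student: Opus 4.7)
The plan is to use the equivalence~(\ref{f:equivalence}) to transport $R[1]$-rigidity in $\pr(R)$ to $\tau$-rigidity in $\mod \Gamma$, with Lemma~\ref{l:morphism} as the bridge between the two surjectivity criteria (Lemma~\ref{l:R[1]-rigid} on the triangulated side, Lemma~\ref{l:tau-rigid} on the module side). Parts (b) and (c) will then be a bookkeeping argument, once the dictionary between the decomposition $X=X'\oplus R'[1]$ and the $\tau$-rigid pair $(M,P)$ is set up.

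\textbf{Part (a).} First I pick any triangle $R_1\xra{f}R_0\to X\to R_1[1]$ with $R_0,R_1\in\add R$ witnessing $X\in\pr(R)$, apply $\homt(R,-)$, and use the rigidity of $R$ (so $\homt(R,R_1[1])=0$) to produce an exact sequence
$$\homt(R,R_1)\xra{f_*}\homt(R,R_0)\to M\to 0,\qquad M:=\homt(R,X),$$
which, under $\add R\simeq\proj\Gamma$, is a projective presentation of $M$ in $\mod\Gamma$. Lemma~\ref{l:morphism} gives the identifications $\homt(R_i,X)\cong\homg(\homt(R,R_i),M)$ and transports $\homt(f,X)$ to $\homg(f_*,M)$. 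Lemma~\ref{l:R[1]-rigid} (equivalent to surjectivity of $\homt(f,X)$) then matches Lemma~\ref{l:tau-rigid} (equivalent to surjectivity of $\homg(f_*,M)$, independent of the chosen presentation up to a split-summand adjustment), yielding (a).

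\textbf{Part (b).} I decompose uniquely $X=X'\oplus R'[1]$ with $R'\in\add R$ and $\add X'\cap\add R[1]=\{0\}$, and define
$$\Phi(X):=\bigl(\homt(R,X'),\,\homt(R,R')\bigr)=:(M,P).$$
Part (a) makes $M$ $\tau$-rigid; the orthogonality $\homg(P,M)=0$ follows from Lemma~\ref{l:morphism}, which gives $\homg(P,M)\cong\homt(R',X')$, a group equal to $R[1](R'[1],X'[1])$ because every morphism $R'[1]\to X'[1]$ factors through its source $R'[1]\in\add R[1]$; this vanishes by the $R[1]$-rigidity of $X$. For the inverse I pull back $M$ to $X'$ via~(\ref{f:equivalence}) (choosing the unique basic representative with $\add X'\cap\add R[1]=\{0\}$) and $P$ to $R'$ via $\add R\simeq\proj\Gamma$, then verify $R[1](X,X[1])=0$ by exhausting the four subgroups $R[1](U,V[1])$ with $U,V\in\{X',R'[1]\}$: the $(X',X')$ piece by (a); the $(R'[1],R'[1])$ piece since $\homt(R',R'[1])=0$ by rigidity of $R$; the $(X',R'[1])$ piece because any factorization $X'\to R[1]\to R'[2]$ has its second factor in $\homt(R,R'[1])=0$; and the $(R'[1],X')$ piece by the isomorphism above combined with $\homg(P,M)=0$.

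\textbf{Part (c) and main obstacle.} Because $\Phi$ preserves counts, $|X|=|M|+|P|$; by Adachi--Iyama--Reiten~\cite{AIR}, a basic $\tau$-rigid pair is support $\tau$-tilting iff it is maximal among $\tau$-rigid pairs, equivalently iff $|M|+|P|=|\Gamma|$. Through $\Phi$, maximality of $X$ with respect to $\pr(R)$ translates directly into maximality of $(M,P)$: any hypothetical extension of $X$ by some $Z\in\pr(R)\setminus\add X$ preserving $R[1]$-rigidity would correspond, by (b), to a proper extension of $(M,P)$ as a $\tau$-rigid pair, and vice versa. I expect the main obstacle to be the case analysis in part (b), and in particular the cross-term $R[1](R'[1],X'[1])$: one must carefully match the ``factorization through $\add R[1]$'' condition in $\mt$ with the vanishing of $\homg(P,M)$ through Lemma~\ref{l:morphism}. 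Once that dictionary is set up, (a) and (c) fall out as clean applications of the earlier lemmas and the AIR completion theorem.
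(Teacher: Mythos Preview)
Your proposal is correct and follows essentially the same route as the paper: the same decomposition $X=X'\oplus R'[1]$, the same use of Lemmas~\ref{l:tau-rigid}, \ref{l:R[1]-rigid}, and~\ref{l:morphism} to bridge the two surjectivity criteria, and the same maximality transfer via~\cite{AIR}. The only noteworthy difference is that in (a) the paper chooses $g$ to be a \emph{minimal} right $\add R$-approximation so that $\homt(R,-)$ produces a minimal projective presentation matching Lemma~\ref{l:tau-rigid} on the nose, whereas you take an arbitrary triangle and invoke presentation-independence of the surjectivity criterion; and in (c) the paper spells out the two-case contradiction argument explicitly rather than phrasing it as a transfer of maximality through the bijection of (b).
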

\begin{proof}
Let $R_1\xra{f}R_0\xra{g} X\to R_1[1]$ be a triangle in $\mt$ with $R_1,R_0\in \add R$ such that $g$ is a minimal right $\add R$-approximation of $X$. As $R$ is rigid and $\add X\cap\add(R[1])=\{0\}$,  applying the functor $\Hom_\mt(R,-)$, we obtain a minimal projective resolution of $\Hom_\mt(R,X)$
\[\homt(R,R_1)\xra{\homt(R,f)}\homt(R,R_0)\ra \homt(R,X)\ra 0.\]

According to Lemma~\ref{l:morphism}, we have the following commutative diagram
\[\xymatrix{\homt(R_0,X)\ar[dd]^{\homt(f,X)}\ar[rr]^-\cong&&\homg(\homt(R,R_0), \homt(R,X))\ar[dd]^{\homg(\homt(R,f),\homt(R,X))}\\
\\
\homt(R_1,X)\ar[rr]^-\cong&&\homg(\homt(R,R_1), \homt(R,X)).
}
\]
Now it follows from Lemma~\ref{l:tau-rigid} and Lemma~\ref{l:R[1]-rigid} that $X$ is $R[1]$-rigid if and only if $\homt(R,X)$ is $\tau$-rigid. This finishes the proof of $(a)$.

Let us consider the statement $(b)$. For each object $X\in \opname{pr}(R)$, $X$ admits a unique decomposition as $X\cong X_0\oplus R_X[1]$, where $R_X\in \add R$ and $X_0$ has no direct summands in $\add R[1]$. We then define 
\[F(X):=(\homt(R,X_0), \homt(R,R_X))\in \mod \Gamma\times \opname{proj} \Gamma.
\]If $X$ is $R[1]$-rigid, according to $(a)$, we deduce that $\homt(R, X_0)$ is a $\tau$-rigid $\Gamma$-module.  And by Lemma~\ref{l:morphism}, we know $\homg(\homt(R,R_X),\homt(R, X_0))=0$. That is, $F$ maps a basic $R[1]$-rigid object to a basic $\tau$-rigid pair of $\Gamma$-modules. We claim that $F$ is the desired bijection.

Since $\homt(R,-):\opname{pr}(R)/(R[1])\to \mod \Gamma$ is an equivalence, we clearly know that $F$ is injective. It remains to show that $F$ is surjective.
For each basic $\tau$-rigid pair $(M,P)$ of $\Gamma$-modules, denote by $\hat{P}\in \add R$ the  object in $\opname{pr}(R)$  corresponding to $P$ and similarly by $\hat{M}\in \opname{pr}(R)$ the object corresponding to $M$, which has no direct summands in $\add R[1]$. By definition, we clearly have $F(\hat{M}\oplus \hat{P}[1])=(M,P)$. It remains to show that $\hat{M}\oplus \hat{P}[1]$ is $R[1]$-rigid, which is a consequence of $(a)$, Lemma~\ref{l:morphism} and the fact that $R$ is rigid. This completes the proof of $(b)$.

For $(c)$, let $X=X_0\oplus R_X[1]$ be a basic maximal $R[1]$-rigid object with respect to $\opname{pr}(R)$, where $R_X\in \add R$ and $X_0$ has no direct summands in $\add R[1]$.   We claim that $F(X)$ is a support $\tau$-tilting pair. Otherwise, at least one of the following two situations happen:
\begin{itemize}
\item[$(i)$] there is an indecomposable object $R_{X^c}\in \add R$ such that $R_{X^c}\not\in \add R_X$ and $(\homt(R, X_0), \homt(R, R_X\oplus R_{X^c}))$ is a basic $\tau$-rigid pair;
\item[$(ii)$] there is an indecomposable object $X_1\in \opname{pr}(R)\backslash \add R[1]$ such that $X_1\not\in \add X_0$ and $(\homt(R, X_0\oplus X_1), \homt(R, R_X))$ is a basic $\tau$-rigid pair.
\end{itemize}
Let us consider the case $(i)$. By definition, we have \[\homg(\homt(R, R_X\oplus R_{X}^c), \homt(R, X_0))=0.\] According to Lemma~\ref{l:morphism}, we clearly have $\homt(R_X\oplus R_{X^c}, X_0)=0$. Now it is straightforward to check that $X\oplus R_{X^c}[1]\in \opname{pr}(R)$ is $R[1]$-rigid. Note that we have $R_{X^c}[1]\not\in \add X$, which contradicts to the assumption that $X$ is a basic maximal $R[1]$-rigid object with respect to $\opname{pr}(R)$. Similarly, one can obtain a contradiction for the case $(ii)$.

Now assume that $(M,P)$ is a basic support $\tau$-tilting pair of $\Gamma$-modules.   According to $(b)$, let $\hat{M}\oplus \hat{P}[1]$ be the basic $R[1]$-rigid object in $\opname{pr}(R)$ corresponding to $(M,P)$. We need to prove that $\hat{M}\oplus \hat{P}[1]$ is maximal with respect to $\opname{pr}(R)$. By definition, we show that if $Z\in \opname{pr}(R)$ is an object such that $R[1](\hat{M}\oplus \hat{P}[1]\oplus Z, \hat{M}[1]\oplus \hat{P}[2]\oplus Z[1])=0$, then $Z\in \add(\hat{M}\oplus \hat{P}[1])$. Without loss of generality, we assume that $Z$ is indecomposable. We separate the remaining proof by considering whether the object $Z$ belongs to $\add R[1]$ or not.

If $Z\not\in \add R[1]$, then $M\oplus \homt(R,Z)$ is a $\tau$-rigid $\Gamma$-module by $(a)$. Moreover, according to $R[1](\hat{M}\oplus \hat{P}[1]\oplus Z, \hat{M}[1]\oplus \hat{P}[2]\oplus Z[1])=0$, we have \[\homg(P, \homt(R, Z))=\homt(\hat{P},  Z)=0.\]  Consequently, $(M\oplus \homt(R,Z), P)$ is a $\tau$-rigid pair. By the assumption that $(M,P)$ is a basic support $\tau$-tilting pair, we conclude that $\homt(R,Z)\in \add M$ and hence $Z\in \add \hat{M}\subseteq \add(\hat{M}\oplus \hat{P}[1])$.

Similarly, for $Z\in \add R[1]$, one can show that $(M, P\oplus \homt(R, Z[-1]))$ is a $\tau$-rigid pair of $\Gamma$-modules. Consequently, we have $Z\in \add \hat{P}[1]\subseteq \add(\hat{M}\oplus \hat{P}[1])$. This completes the proof of $(c)$.
\end{proof}

Since all basic support $\tau$-tilting pairs of $\Gamma$-modules have the same number of non-isomorphic indecomposable direct summands \cite{AIR}. As a byproduct of the proof, we have
\begin{cor}~\label{c:number-direct-summand}
\begin{enumerate}
\item Each $R[1]$-rigid object in $\opname{pr}(R)$ can be completed to a maximal $R[1]$-rigid object with respect to $\opname{pr}(R)$.
\item All basic maximal $R[1]$-rigid objects with respect to $\opname{pr}(R)$ have the same number of non-isomorphic indecomposable direct summands.
\end{enumerate}
\end{cor}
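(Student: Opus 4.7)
The plan is to deduce both parts of the corollary by transporting the analogous statements for $\tau$-rigid and support $\tau$-tilting pairs along the bijection established in Theorem~\ref{t:thm1}. Recall from~\cite{AIR} that any basic $\tau$-rigid pair $(M,P)$ over $\Gamma$ satisfies $|M|+|P|\leq|\Gamma|$, with equality if and only if $(M,P)$ is a support $\tau$-tilting pair, and that every basic $\tau$-rigid pair can be completed to a basic support $\tau$-tilting pair by adding indecomposable summands.

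For part $(1)$, I would start with an $R[1]$-rigid object $X\in\pr(R)$ and decompose it uniquely as $X\cong X_0\oplus R_X[1]$ with $R_X\in\add R$ and $X_0$ having no summand in $\add R[1]$. By Theorem~\ref{t:thm1}$(b)$ the image $F(X)=(\Hom_\mt(R,X_0),\Hom_\mt(R,R_X))$ is a basic $\tau$-rigid pair of $\Gamma$-modules. Applying the AIR completion, pick an extension to a basic support $\tau$-tilting pair $(M',P')$ with $\Hom_\mt(R,X_0)\in\add M'$ and $\Hom_\mt(R,R_X)\in\add P'$. Now use Theorem~\ref{t:thm1}$(c)$ to pull $(M',P')$ back to a basic maximal $R[1]$-rigid object $Y=\hat M'\oplus\hat P'[1]\in\pr(R)$. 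The key point is to verify $X$ is a summand of $Y$: this follows because the equivalence $\Hom_\mt(R,-):\pr(R)/(R[1])\to\mod\Gamma$ sends $\add X_0$ to $\add\Hom_\mt(R,X_0)\subseteq\add M'$, hence $X_0\in\add\hat M'$, while $R_X\in\add R$ corresponds via $\Hom_\mt(R,-)|_{\add R}\simeq\opname{proj}\Gamma$ to a summand of $P'$, giving $R_X[1]\in\add\hat P'[1]$.

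For part $(2)$, I would simply invoke the bijection of Theorem~\ref{t:thm1}$(c)$ together with the AIR fact that every basic support $\tau$-tilting pair of $\Gamma$-modules has exactly $|\Gamma|$ non-isomorphic indecomposable summands. Since the functor $\Hom_\mt(R,-)$ restricted to the decomposition $X\cong X_0\oplus R_X[1]$ yields $|X|=|X_0|+|R_X|=|\Hom_\mt(R,X_0)|+|\Hom_\mt(R,R_X)|$ (using the additive equivalences $\pr(R)/(R[1])\simeq\mod\Gamma$ and $\add R\simeq\opname{proj}\Gamma$), this common number is precisely $|\Gamma|$ for every basic maximal $R[1]$-rigid object with respect to $\pr(R)$.

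The only subtle point I anticipate is the verification in $(1)$ that completing the pair on the module side really produces a maximal $R[1]$-rigid object containing $X$ as a direct summand, rather than merely one with the same image under $F$. This is handled by carefully tracking the direct-sum decomposition $X=X_0\oplus R_X[1]$ through the equivalence, exploiting that $\add X_0\cap\add(R[1])=\{0\}$ so that the two components of $F(X)$ lift unambiguously to the two parts $\hat M'$ and $\hat P'[1]$ of the completion.
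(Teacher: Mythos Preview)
Your proposal is correct and follows exactly the approach the paper intends: the paper gives no detailed argument for Corollary~\ref{c:number-direct-summand}, merely stating it ``as a byproduct of the proof'' of Theorem~\ref{t:thm1} together with the cited fact from~\cite{AIR} that every basic $\tau$-rigid pair completes to a support $\tau$-tilting pair and that all such pairs have $|\Gamma|$ indecomposable summands. Your write-up simply makes this transport along the bijection $F$ explicit, and the care you take with the decomposition $X=X_0\oplus R_X[1]$ is precisely what is implicit in the construction of $F$ in the proof of Theorem~\ref{t:thm1}(b).
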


Recall that an object $T\in \mt$ is a {\it cluster tilting object} provided that
\[\add T=\{X\in \mt~|~\homt(T,X[1])=0\}=\{X\in \mt~|~\homt(X,T[1])=0\}.
\]
It is clear that cluster tilting objects are maximal rigid. Let $R$ be a cluster tilting object of $\mt$.  In this case, we have  $\opname{pr}(R)=\mt$ ({\it cf.}~\cite{IY,KZ}).
An object $X\in\mt$ is called {\it $R[1]$-cluster tilting} if $X$ is $R[1]$-rigid and $|X|=|R|$~({\it cf.}~\cite{YZ}).
As a direct consequence of Corollary~\ref{c:number-direct-summand}, we have
\begin{lemma}~\label{l:cluster-tilting}
Let $R$ be a cluster tilting object of $\mt$. Then an object $T\in \mt$ is maximal $R[1]$-rigid with respect to $\mt$ if and only if $T$ is $R[1]$-cluster tilting.
\end{lemma}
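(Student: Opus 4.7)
The plan is to derive this lemma as a direct consequence of the cluster-tilting assumption together with Corollary~\ref{c:number-direct-summand}, using the fact that $\pr(R)=\mt$ when $R$ is cluster tilting (as recalled just before the statement). The only real content is identifying the invariant $|R|$ with the common number of summands of basic maximal $R[1]$-rigid objects.

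First I would prove that the basic rigid object $R$ itself is maximal $R[1]$-rigid with respect to $\mt$. For any $Z\in\mt$ with $R[1](R\oplus Z,R[1]\oplus Z[1])=0$, the component $R[1](Z,R[1])$ coincides with the full $\Hom_\mt(Z,R[1])$, since every morphism $f\colon Z\to R[1]$ trivially factors as $Z\xrightarrow{f} R[1]\xrightarrow{\id} R[1]$ through $\add(R[1])$. Hence $\Hom_\mt(Z,R[1])=0$, and the defining property of a cluster tilting object forces $Z\in\add R$. In particular $R$ is a basic maximal $R[1]$-rigid object with respect to $\pr(R)=\mt$, with $|R|$ indecomposable summands.

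Then the equivalence follows from Corollary~\ref{c:number-direct-summand}. For the forward direction, if $T$ is basic maximal $R[1]$-rigid with respect to $\mt$, then part (2) of the corollary (applied together with the previous paragraph) yields $|T|=|R|$, and together with the $R[1]$-rigidity of $T$ this is exactly the definition of an $R[1]$-cluster tilting object. Conversely, if $T$ is $R[1]$-cluster tilting, then $T$ is $R[1]$-rigid and lies in $\pr(R)=\mt$, so by part (1) of the corollary it embeds as a direct summand of some basic maximal $R[1]$-rigid object $T'$ with respect to $\mt$; part (2) then gives $|T'|=|R|=|T|$, so $T=T'$ up to isomorphism and $T$ is itself maximal $R[1]$-rigid with respect to $\mt$.

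There is no real obstacle — the argument is essentially a bookkeeping exercise once one knows that the invariant in Corollary~\ref{c:number-direct-summand}(2) equals $|R|$. The only small point requiring care is the verification that $R$ is maximal $R[1]$-rigid in $\mt$, where the cluster-tilting hypothesis on $R$ (rather than mere rigidity) is used in an essential way; alternatively one could bypass this step and read the identification $|T|=|R|$ directly off the bijection of Theorem~\ref{t:thm1}(c) by noting that support $\tau$-tilting pairs $(M,P)$ over $\Gamma$ satisfy $|M|+|P|=|\Gamma|=|R|$.
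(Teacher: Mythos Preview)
Your proof is correct and follows exactly the approach the paper intends: the paper does not give an explicit argument but simply records the lemma as ``a direct consequence of Corollary~\ref{c:number-direct-summand}'', and you have spelled out precisely how that deduction goes, including the key identification of the common invariant with $|R|$ via the observation that $R$ itself is maximal $R[1]$-rigid with respect to $\mt$.
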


Combining Lemma~\ref{l:cluster-tilting} with Theorem~\ref{t:thm1}, we obtain the following result of Yang-Zhu~\cite[Theorem 1.2]{YZ}.
\begin{corollary}~\label{c:YZ}
Let $R$ be a cluster tilting object of $\mt$ with endomorphism algebra $\Gamma=\End_\mt(R)$. There is a bijection between the set of isomorphism classes of basic $R[1]$-cluster tilting objects and the set of isomorphism classes of basic support $\tau$-tilting $\Gamma$-modules.
\end{corollary}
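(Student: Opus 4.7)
The plan is to assemble the corollary directly from the two ingredients just established in this section, namely Theorem~\ref{t:thm1}(c) and Lemma~\ref{l:cluster-tilting}. No new technical input is needed; the work consists of verifying that the hypotheses of those results are met when $R$ is a cluster tilting object, and then reading off the resulting bijection.

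First, I would recall the fact (noted in the paragraph preceding Lemma~\ref{l:cluster-tilting}, from~\cite{IY,KZ}) that for a cluster tilting object $R$ every object of $\mt$ fits into a triangle $R_1\to R_0\to X\to R_1[1]$ with $R_0,R_1\in\add R$, i.e.\ $\opname{pr}(R)=\mt$. Consequently, the notions ``maximal $R[1]$-rigid with respect to $\opname{pr}(R)$'' appearing in Theorem~\ref{t:thm1}(c) and ``maximal $R[1]$-rigid with respect to $\mt$'' appearing in Lemma~\ref{l:cluster-tilting} coincide. This identification is what allows the two results to be glued together.

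Next I would invoke Lemma~\ref{l:cluster-tilting}, which precisely matches the basic maximal $R[1]$-rigid objects with respect to $\mt$ with the basic $R[1]$-cluster tilting objects (i.e.\ those basic $R[1]$-rigid $T$ with $|T|=|R|$). Then Theorem~\ref{t:thm1}(c), applied to the rigid object $R$ (which is rigid because it is cluster tilting), yields a bijection induced by $\Hom_\mt(R,-)$ between the basic maximal $R[1]$-rigid objects with respect to $\opname{pr}(R)=\mt$ and the basic support $\tau$-tilting $\Gamma$-modules. Composing these two identifications produces the desired bijection.

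There is no substantive obstacle, since both inputs are already proved; the only point worth checking carefully is the implicit use in Lemma~\ref{l:cluster-tilting} that $R$ itself qualifies as a basic maximal $R[1]$-rigid object, so that Corollary~\ref{c:number-direct-summand}(2) pins the common number of indecomposable summands at $|R|$. This in turn relies on the cluster tilting characterization $\add R=\{X\in\mt\mid \homt(X,R[1])=0\}$, which ensures that any object $Z$ extending $R$ to an $R[1]$-rigid object $R\oplus Z$ must already lie in $\add R$. With this verification, the corollary follows immediately.
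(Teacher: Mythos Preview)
Your proposal is correct and follows exactly the paper's approach: the corollary is stated as an immediate consequence of Lemma~\ref{l:cluster-tilting} combined with Theorem~\ref{t:thm1}(c), using $\opname{pr}(R)=\mt$ for cluster tilting $R$. Your additional remarks (verifying that $R$ is itself maximal $R[1]$-rigid so that Corollary~\ref{c:number-direct-summand}(2) fixes the summand count at $|R|$) merely unpack the proof of Lemma~\ref{l:cluster-tilting} and are consistent with the paper.
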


\subsection{A characterization of tilting modules}
Recall that a basic $\Gamma$-module $M$ is a {\it tilting module} provided that
\begin{enumerate}
\item[$\bullet$] $\opname{pd}_\Gamma M\leq 1$;
\item[$\bullet$] $\Ext^1_{\Gamma}(M,M)=0$;
\item[$\bullet$] $|M|=|\Gamma|$.
\end{enumerate}
It has been observed in~\cite{AIR} that tilting $\Gamma$-modules are precisely faithful support $\tau$-tilting $\Gamma$-modules.   As in \cite{LX2014,BBT}, we consider the projective dimension of $\Gamma$-modules and give a characterization of tilting $\Gamma$-modules via $\opname{pr}(R)$.
\begin{theorem}~\label{t:thm2}
For an object $X\in \opname{pr}(R)$ without direct summands in $\add R[1]$, we have
\[\opname{pd}_\Gamma \Hom_\mt(R, X)\leq 1~\text{ if and only if}~ X(R[1], R[1])=0.\] In particular, for a basic object $X\in \opname{pr}(R)$ which has no direct summands in $\add R[1]$,  $\Hom_\mt(R,X)$ is a tilting $\Gamma$-module if and only if $X(R[1], R[1])=0$ and $X$ is maximal $R[1]$-rigid with respect to $\opname{pr}(R)$.
\end{theorem}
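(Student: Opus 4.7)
The plan is to prove the projective-dimension equivalence first via the minimal projective presentation of $\Hom_\mt(R,X)$ extracted from a triangle that presents $X$ by $R$, and then combine this with Theorem~\ref{t:thm1}(c) and the Adachi--Iyama--Reiten characterization of tilting $\Gamma$-modules recalled above.

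For the first equivalence, I would fix a triangle $R_1\xra{f}R_0\xra{g}X\xra{h}R_1[1]$ with $g$ a minimal right $\add R$-approximation. Since $R$ is rigid and $X$ has no summand in $\add R[1]$, applying $\Hom_\mt(R,-)$ yields a minimal projective presentation of $\Hom_\mt(R,X)$, so $\opname{pd}_\Gamma\Hom_\mt(R,X)\leq 1$ reduces to the injectivity of $\Hom_\mt(R,f)$; the long exact sequence then reformulates this as the vanishing of $h_*:\Hom_\mt(R[1],X)\to\Hom_\mt(R[1],R_1[1])$. I would match $h_*=0$ with $X(R[1],R[1])=0$ in two directions. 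First, $R$ rigid implies $\Hom_\mt(R_0,R[1])=0$, so applying $\Hom_\mt(-,R[1])$ to the triangle shows every morphism $X\to R[1]$ factors as $b=b'\circ h$; hence, assuming $h_*=0$, any composition $R[1]\xra{a}X\xra{b}R[1]$ equals $b'\circ(h\circ a)=0$, and additivity extends this to $\add X$, giving $X(R[1],R[1])=0$. Conversely, decomposing $R_1$ as a direct sum of indecomposable summands of $R$, each component of $h\circ a$ (for $a:R[1]\to X$) is a morphism $R[1]\to R[1]$ factoring through $\add X$, so $X(R[1],R[1])=0$ forces $h\circ a=0$ for every such $a$, i.e.\ $h_*=0$.

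For the ``in particular'' clause, the hypothesis that $X$ has no summand in $\add R[1]$ forces the projective part of the support $\tau$-tilting pair assigned to $X$ by Theorem~\ref{t:thm1}(b)--(c) to vanish, so ``$X$ is maximal $R[1]$-rigid with respect to $\opname{pr}(R)$'' translates into ``$\Hom_\mt(R,X)$ is a support $\tau$-tilting $\Gamma$-module with $|\Hom_\mt(R,X)|=|\Gamma|$''. The first equivalence then substitutes $\opname{pd}_\Gamma\Hom_\mt(R,X)\leq 1$ for $X(R[1],R[1])=0$, and the Adachi--Iyama--Reiten criterion (via the Auslander--Reiten formula, a $\tau$-rigid module $M$ with $|M|=|\Gamma|$ and $\opname{pd}_\Gamma M\leq 1$ satisfies $\Ext^1_\Gamma(M,M)=0$ and is therefore tilting) closes the argument. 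The main obstacle I anticipate is the two-directional matching of $h_*=0$ with $X(R[1],R[1])=0$, where the factoring via rigidity and the componentwise reduction over indecomposable summands of $R_1$ both have to be handled carefully; once this is in place, the rest is a formal assembly of Theorem~\ref{t:thm1} with the Adachi--Iyama--Reiten theory.
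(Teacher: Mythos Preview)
Your proposal is correct and follows essentially the same route as the paper's proof: both reduce $\opname{pd}_\Gamma\Hom_\mt(R,X)\leq 1$ to the injectivity of $\Hom_\mt(R,f)$ via the minimal presentation, reformulate this as the vanishing of $\Hom_\mt(R,h[-1])$ (your $h_*$, up to a shift), and match this with $X(R[1],R[1])=0$ using the factorization of any $X\to R[1]$ through $h$ forced by rigidity of $R$. The ``in particular'' clause is likewise handled the same way in both, combining Theorem~\ref{t:thm1}(c), Corollary~\ref{c:number-direct-summand}, and the Adachi--Iyama--Reiten description of tilting modules.
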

\begin{proof}
Since $X\in \opname{pr}(R)$, we have a triangle $R_1\xra{f}R_0\xra{g} X\xra{h} R_1[1]$ in $\mt$ such that $R_0,R_1\in \add R$ and $g$ is a minimal right $\add R$-approximation of $X$. Applying the functor $\homt(R,-)$, we obtain
 a long exact sequence
\[\homt(R,X[-1])\xra{\homt(R,h[-1])}\homt(R, R_1)\xra{\homt(R,f)}\homt(R,R_0)\xra{}\homt(R,X)\to 0.
\]

Assume that $X(R[1],R[1])=0$. It follows that $\homt(R,h[-1])=0$ and $\homt(R,f)$ is injective. That is, $\opname{pd}_\Gamma \homt(R,X)\leq 1$.

Suppose that $\opname{pd}_\Gamma \homt(R,X)\leq 1$. Then $\homt(R,f)$ is injective and $\homt(R,h[-1])=0$. It suffices to prove that $X[-1](R,R)=0$.
Let $s: R\to X[-1]$ be a morphism from $R$ to $X[-1]$ and $t: X[-1]\to R$ a morphism from $X[-1]$ to $R$. We need to show that $t\circ s=0$.
Since $R$ is rigid, we know that the morphism $t:X[-1]\to R$ factors through the morphism $h[-1]$. In particular, there is a morphism $t':R_1\to R$ such that $t=t'\circ h[-1]$.
On the other hand, by $\homt(R,h[-1])=0$, we deduce that $h[-1]\circ s=0$. Consequently, $t\circ s=t'\circ h[-1]\circ s=0$.

Now we assume that $X$ is a maximal $R[1]$-rigid object with respect to $\opname{pr}(R)$ such that $X(R[1],R[1])=0$. By Theorem~\ref{t:thm1}, $\homt(R,X)$ is a support $\tau$-tilting $\Gamma$-module. Since $X$ does not admit an indecomposable direct summand in $\add R[1]$, we have $|\homt(R,X)|=|X|=|R|=|\Gamma|$ by Corollary~\ref{c:number-direct-summand}. The condition $X(R[1],R[1])=0$ implies that $\opname{pd}_\Gamma \homt(R,X)\leq 1$. Putting all of these together, we conclude that $\homt(R,X)$ is a tilting $\Gamma$-module.

Conversely, let us assume that $\homt(R,X)$ is a tilting $\Gamma$-module. Since tilting modules are support $\tau$-tilting modules, we know that $X$ is a maximal $R[1]$-rigid object with respect to $\opname{pr}(R)$ by Theorem~\ref{t:thm1} (c). By definition of tilting modules, we have $\opname{pd}_\Gamma \homt(R,X)\leq 1$. Consequently, $X(R[1],R[1])=0$ and we are done.
\end{proof}

\section{ $R[1]$-rigid objects and presilting objects}~\label{S:silting}
\subsection{(Pre)silting objects}
Recall that $\mt$ is a Krull-Schmidt, Hom-finite triangulated category with shift functor [1].  Following~\cite{AI},
 for $X,Y\in \mt$ and $m\in \Z$,  we write the vanishing condition $\homt(X,Y[i])=0$ for $i>m$ by $\homt(X,Y[>m])=0$. 
An object $X\in\mt$ is called {\it presilting} if $\homt(X,X[>0])=0$; $X$ is called {\it silting} if $X$ is presilting and the thick subcategory of $\mt$ containing $X$  is $\mt$; $X$ is called {\it partial silting} if $X$ is a direct summand of some silting objects. 

It is clear that (pre)silting objects are rigid. The following result has been proved in~\cite{AI}.
\begin{lem}\label{l:number-direct-summands-silting}
All silting objects in $\mt$ have the same number of non-isomorphic indecomposable summands.
\end{lem}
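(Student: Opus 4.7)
My plan is to follow the standard Grothendieck group strategy of Aihara--Iyama. Let $S$ and $T$ be two basic silting objects of $\mt$. The goal is to produce an invariant of $\mt$ — independent of the chosen silting object — that computes $|S|$.

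The first step is to associate to any silting object $S$ the split Grothendieck group $K_0^{split}(\add S)$, which is free abelian of rank $|S|$ with basis the indecomposable summands of $S$. I would then define a natural map $\ind_S\colon \opname{obj}\mt \to K_0^{split}(\add S)$ (the \emph{$S$-index}) by the following recipe: for $X \in \mt$, use the presilting property $\Hom_\mt(S,S[>0])=0$ together with the fact that $\opname{thick}(S)=\mt$ to show that $X$ admits a two-term-like presentation $S_1 \to S_0 \to X$ with $S_0, S_1 \in \add S$ (after suitable shifts and iteration), and set $\ind_S(X) = [S_0]-[S_1]$. One checks that $\ind_S$ is well-defined and additive on triangles, hence induces a group homomorphism $\overline{\ind}_S\colon K_0(\mt) \to K_0^{split}(\add S)$.

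The second step is to show that $\overline{\ind}_S$ is an isomorphism. Surjectivity follows because each $[S_i]$ is clearly in the image. Injectivity is the key point, and uses the silting condition: any relation in $K_0(\mt)$ among classes $[S_i]$ comes from triangles within $\add S$, and the vanishing $\Hom_\mt(S,S[>0])=0$ forces such triangles to split, so the relation is trivial in $K_0^{split}(\add S)$. Consequently $|S| = \opname{rank} K_0^{split}(\add S) = \opname{rank} K_0(\mt)$, and the same formula holds for $T$, giving $|S|=|T|$.

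The main obstacle is the second step: rigorously constructing the index and verifying that it descends to $K_0(\mt)$ and is injective. An alternative, more elementary route is to develop silting mutation, which by construction exchanges one indecomposable summand for another and thus preserves $|S|$; but since one cannot in general connect any two silting objects by a finite chain of mutations in a Krull--Schmidt Hom-finite triangulated category, the Grothendieck group approach is the more robust one and is the approach I would carry out in detail.
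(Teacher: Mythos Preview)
The paper does not give a proof here; it simply records the statement as a result of Aihara--Iyama~\cite{AI}. Your overall strategy --- showing that the indecomposable summands of any silting object $S$ form a $\mathbb{Z}$-basis of $K_0(\mt)$, so that $|S|=\rank K_0(\mt)$ is an invariant of $\mt$ --- is exactly theirs.

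However, your injectivity argument has a genuine gap. Relations in $K_0(\mt)$ are generated by \emph{all} triangles in $\mt$, not only those whose three terms lie in $\add S$, so the assertion ``any relation among classes $[S_i]$ comes from triangles within $\add S$'' is precisely the non-trivial point and cannot be assumed. The way Aihara--Iyama close this is to observe that a silting object $S$ induces a bounded co-$t$-structure on $\mt$ with coheart $\add S$; every object $X$ then admits a finite weight filtration with graded pieces $G_i\in\add S[i]$, and setting $\phi([X]):=\sum_i(-1)^i[G_i[-i]]\in K_0^{\mathrm{split}}(\add S)$ gives a well-defined homomorphism (well-definedness uses $\homt(S,S[>0])=0$ together with the octahedral axiom) which is inverse to the natural surjection $K_0^{\mathrm{split}}(\add S)\to K_0(\mt)$. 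This is morally the index map you describe in your first step, but note that its well-definedness \emph{is} the injectivity --- once you have it, there is nothing left to prove, and the separate splitting argument you sketch is both unnecessary and, as stated, incorrect.
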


In general, it is not known that whether a presilting object is partial silting.  The following is proved in~\cite{A}.
\begin{lem}\label{l:2-term presilting is partial silting}
Let $R$ be a silting object and $X$ a presilting object of $\mt$. If  $X\in \opname{pr}(R)$, then there is a presilting object $Y\in \opname{pr}(R)$ such that $X\oplus Y$ is a silting object of $\mt$. 
\end{lem}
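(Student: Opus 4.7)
The plan is to combine the bijection of Theorem~\ref{t:thm1} with the Bongartz completion for $\tau$-rigid pairs from~\cite{AIR}, and then to verify that the resulting maximal $R[1]$-rigid object is actually silting, using the silting hypothesis on $R$ in a crucial way.

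First decompose $X=X_0\oplus R_X[1]$ with $R_X\in \add R$ and $X_0$ having no direct summands in $\add(R[1])$. Since $X$ is presilting, it is in particular $R[1]$-rigid, so by Theorem~\ref{t:thm1}(a) together with Lemma~\ref{l:morphism} the pair
\[
(M,P) := \bigl(\Hom_\mt(R,X_0),\,\Hom_\mt(R,R_X)\bigr)
\]
is a $\tau$-rigid pair of $\Gamma$-modules. Applying the Bongartz completion of~\cite{AIR} extends it to a basic support $\tau$-tilting pair $(M\oplus M',\,P\oplus P')$. Theorem~\ref{t:thm1}(c) then lifts this back to a basic maximal $R[1]$-rigid object $\tilde X=X\oplus Y\in \opname{pr}(R)$, where $Y$ corresponds under $\Hom_\mt(R,-)$ to the added pair $(M',P')$.

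Next I would show that $\tilde X$ is presilting. Choose a triangle $R_1\xra{f}R_0\to \tilde X\to R_1[1]$ with $R_0,R_1\in \add R$. A short calculation with this same triangle, using that $R$ is silting, gives $\Hom_\mt(R_i,\tilde X[n])=0$ for all $n\ge 1$. The long exact sequence obtained by applying $\Hom_\mt(-,\tilde X)$ to the triangle therefore yields
\[
\Hom_\mt(\tilde X,\tilde X[n]) = 0 \text{ for } n\ge 2, \qquad
\Hom_\mt(\tilde X,\tilde X[1]) \cong \operatorname{coker}\bigl(\Hom_\mt(f,\tilde X)\bigr).
\]
Since $\tilde X$ is $R[1]$-rigid, Lemma~\ref{l:R[1]-rigid} gives the surjectivity of $\Hom_\mt(f,\tilde X)$, so the cokernel vanishes and $\tilde X$ is presilting. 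Moreover $|\tilde X|=|R|$ by Corollary~\ref{c:number-direct-summand}(b) combined with the identity $|M\oplus M'|+|P\oplus P'|=|\Gamma|=|R|$.

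The main obstacle is the last step: upgrading from ``presilting with $|R|$ non-isomorphic indecomposable summands'' to ``silting'', i.e.\ verifying $\operatorname{thick}(\tilde X)=\mt$. The strategy is to show $\add R\subseteq \operatorname{thick}(\tilde X)$ by exploiting the minimal $\add R$-presentations of the indecomposable summands of $\tilde X$ together with the structural property of the support $\tau$-tilting pair $(M\oplus M',\,P\oplus P')$: every indecomposable summand of $R$ corresponds (under $\Hom_\mt(R,-)$) to an indecomposable projective of $\Gamma$, and each such projective either appears as a summand of $P\oplus P'$ (so its shift is a summand of $\tilde X$) or appears in the minimal projective presentation of $M\oplus M'$ (so it is recoverable from the triangles defining the corresponding summands of $\tilde X$). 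This is the content of Aihara's argument in~\cite{A}, and once established it yields $\operatorname{thick}(\tilde X)\supseteq\operatorname{thick}(R)=\mt$, completing the proof with $Y$ as the desired presilting complement.
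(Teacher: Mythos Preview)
The paper does not prove this lemma at all: it simply records it as ``proved in~\cite{A}'' and uses it as a black box in the proof of Theorem~\ref{t:case d=infinite}. Your proposal is therefore not a reconstruction of the paper's argument but an independent attempt, and it is worth comparing the two viewpoints.

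Your route via Theorem~\ref{t:thm1} and Bongartz completion is sound up to the point where you produce a presilting object $\tilde X=X\oplus Y\in\opname{pr}(R)$ with $|\tilde X|=|R|$. Those steps are correct and, in fact, essentially reprove the presilting half of Theorem~\ref{t:case d=infinite}(1) along the way. This is a pleasant internal route: it shows that the candidate complement $Y$ can be manufactured from the paper's own bijection rather than imported from~\cite{A}.

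The genuine gap is in your final step, where you must show $\operatorname{thick}(\tilde X)=\mt$. Your sketch asserts that every indecomposable projective over $\Gamma$ appears either in $P\oplus P'$ or in the minimal projective presentation of $M\oplus M'$; this is true (it follows, e.g., from the fact that the $g$-vectors of a support $\tau$-tilting pair form a $\mathbb{Z}$-basis), but it does \emph{not} suffice to conclude $R\in\operatorname{thick}(\tilde X)$. From the presentation triangle $R_1\to R_0\to \tilde X_0\to R_1[1]$ you only get that $R_0\in\operatorname{thick}(\tilde X)$ \emph{iff} $R_1\in\operatorname{thick}(\tilde X)$; knowing that a given indecomposable $R_i$ occurs as a summand of $R_0\oplus R_1$ does not, by itself, place $R_i$ in the thick closure. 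One needs something stronger, for instance the Bongartz-type triangle coming from a left $\add\tilde X$-approximation $R\to \tilde X'\to \tilde X''\to R[1]$ with $\tilde X',\tilde X''\in\add\tilde X$ (this is how~\cite{A} and~\cite{AI} argue), or a careful inductive linking argument through the individual presentation triangles. When you write ``this is the content of Aihara's argument in~\cite{A}'' you are, at that moment, doing exactly what the paper does: citing~\cite{A} for the decisive step. So your proof is more detailed than the paper's bare citation, but it is not self-contained at the crucial point.
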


\subsection{From $R[1]$-rigid objects to (pre)silting objects }

\begin{lem} \label{l:presilting case}
Let $R$ be a presilting object and  $X\in \opname{pr}(R)$.  Then $\homt(X,X[>1])=0$.
\end{lem}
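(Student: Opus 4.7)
The plan is to reduce the vanishing $\Hom_\mt(X,X[i])=0$ for $i\geq 2$ to the presilting hypothesis on $R$ via the triangle that expresses $X$ as finitely presented by $R$.

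Concretely, since $X\in\pr(R)$, fix a triangle
\[
R_1\xrightarrow{f} R_0\xrightarrow{g} X\xrightarrow{h} R_1[1]
\]
with $R_0,R_1\in\add R$. First I would compute $\homt(R,X[j])$ for $j\geq 1$ by applying $\homt(R,-)$ to this triangle: the neighboring terms $\homt(R,R_0[j])$ and $\homt(R,R_1[j+1])$ both vanish for $j\geq 1$ because $R$ is presilting, so $\homt(R,X[j])=0$ for all $j\geq 1$. Since $R_0,R_1\in\add R$, the same vanishing holds with $R$ replaced by $R_0$ or $R_1$.

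Next, for a fixed $i\geq 2$, I would apply $\homt(-,X[i])$ to the same triangle to produce the exact sequence
\[
\homt(R_1[1],X[i])\longrightarrow\homt(X,X[i])\longrightarrow\homt(R_0,X[i]).
\]
The right-hand term $\homt(R_0,X[i])$ vanishes by the previous step (with $j=i\geq 1$). The left-hand term rewrites as $\homt(R_1,X[i-1])$, which vanishes by the previous step (with $j=i-1\geq 1$, using $i\geq 2$). Hence $\homt(X,X[i])=0$ for every $i\geq 2$, which is precisely $\homt(X,X[>1])=0$.

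There is essentially no obstacle here: the whole argument is a two-step diagram chase using only the presilting assumption and the existence of an $\add R$-presentation of $X$. The only thing to be careful about is tracking the shift in the second step so that both boundary terms still fall into the range $j\geq 1$ where the first step applies; that is why the conclusion is vanishing for $i>1$ rather than for $i>0$.
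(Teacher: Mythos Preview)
Your argument is correct and matches the paper's proof essentially line for line: both fix the triangle $R_1\to R_0\to X\to R_1[1]$, first apply $\homt(R,-)$ to deduce $\homt(R,X[j])=0$ for $j\geq 1$ from the presilting hypothesis, and then apply $\homt(-,X[i])$ to squeeze $\homt(X,X[i])$ between two vanishing terms for $i\geq 2$. Your explicit tracking of the shift in the second step is exactly the point the paper handles implicitly.
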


\begin{proof}
As $X\in \opname{pr}(R)$, we have the following triangle 
\begin{equation}\label{tr:presilting}
R_1\xra{f}R_0\xra{g}X\xra{h}R_1[1],
\end{equation}
where $R_0,R_1\in \add R$.
Applying the functor $\homt(R,-)$ to the triangle yields a 
long exact sequence
\[\cdots \to \homt(R, R_0[i])\to\homt(R,X[i])\to  \homt(R, R_1[i+1])\cdots.
\]
Then the assumption that $R$ is presilting implies that 
\begin{equation}\label{eq:presilting}
\homt(R, X[>0])=0.
\end{equation}
On the other hand, applying the functor $\homt(-,X[i])$ to the triangle \eqref{tr:presilting}, we obtain a long exact sequence
\[\cdots\to\homt(R_1[1],X[i])\ra \homt(X,X[i])\ra \homt(R_0,X[i])\to \cdots.
\]
Then \eqref{eq:presilting} implies that $\homt(X,X[>1])=0$.
 \end{proof}

\begin{thm}\label{t:case d=infinite}
Let $X$ be an object in $\opname{pr}(R)$.  
\begin{enumerate}
\item If $R$ is a presilting  object, then the followings are equivalent.
\begin{itemize}
\item[(a)] $X$ is an  $R[1]$-rigid object;
\item[(b)] $X$ is a rigid object;
\item[(c)] $X$ is a presilting object. 
\end{itemize} 
\item  If $R$ is a silting object,  then  $X$ is a maximal $R[1]$-rigid object with respect to $\opname{pr}(R)$ if and only if $X$ is a silting object. 
 \end{enumerate}
\end{thm}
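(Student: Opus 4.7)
The plan is to dispatch part (1) first, since part (2) reduces to it once combined with Lemma~\ref{l:number-direct-summands-silting} and Lemma~\ref{l:2-term presilting is partial silting}. For part (1), the implications $(c) \Rightarrow (b) \Rightarrow (a)$ are immediate from the definitions, so the whole content lies in $(a) \Rightarrow (c)$. I would first invoke Lemma~\ref{l:presilting case} to obtain $\homt(X, X[>1]) = 0$ for free, reducing the task to showing $\homt(X, X[1]) = 0$. Starting from a defining triangle $R_1 \xra{f} R_0 \xra{g} X \xra{h} R_1[1]$ with $R_i \in \add R$, I would apply $\homt(-, X[1])$ to obtain an exact sequence
\[
\homt(R_1[1], X[1]) \to \homt(X, X[1]) \to \homt(R_0, X[1]),
\]
and observe that the rightmost term vanishes: indeed, equation (\ref{eq:presilting}) inside the proof of Lemma~\ref{l:presilting case} already gives $\homt(R, X[>0]) = 0$ when $R$ is presilting and $X \in \opname{pr}(R)$. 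Consequently any $\alpha: X \to X[1]$ factors as $\alpha = \beta \circ h$ for some $\beta: R_1[1] \to X[1]$, so $\alpha$ factors through $R_1[1] \in \add R[1]$ and belongs to $R[1](X, X[1])$. The hypothesis $(a)$ then forces $\alpha = 0$.

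For the forward direction of part (2), suppose $X \in \opname{pr}(R)$ is silting. Then $X$ is presilting, hence $R[1]$-rigid by part (1). To verify maximality, take any $Z \in \opname{pr}(R)$ with $R[1](X \oplus Z, X[1] \oplus Z[1]) = 0$; by part (1), $X \oplus Z$ is presilting. Lemma~\ref{l:2-term presilting is partial silting} yields $Y \in \opname{pr}(R)$ such that $X \oplus Z \oplus Y$ is silting, and Lemma~\ref{l:number-direct-summands-silting} then forces $|X \oplus Z \oplus Y| = |R| = |X|$, so both $Z$ and $Y$ lie in $\add X$.

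For the converse, let $X$ be maximal $R[1]$-rigid with respect to $\opname{pr}(R)$. Part (1) says $X$ is presilting, and Lemma~\ref{l:2-term presilting is partial silting} again produces $Y \in \opname{pr}(R)$ with $X \oplus Y$ silting. Applying part (1) to $X \oplus Y$ gives $R[1](X \oplus Y, X[1] \oplus Y[1]) = 0$, so the maximality of $X$ forces $Y \in \add X$. Hence $\add X = \add(X \oplus Y)$, and in particular $X$ generates the same thick subcategory as $X \oplus Y$, namely $\mt$, so $X$ is silting.

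The main obstacle is the implication $(a) \Rightarrow (c)$ in part (1); the key trick is that the connecting map $h: X \to R_1[1]$ in the defining triangle of $X \in \opname{pr}(R)$ lands in $\add R[1]$, so ``factorizations through $h$'' are exactly elements of the ideal $R[1](-, -)$ that $R[1]$-rigidity controls. Everything in part (2) is then bookkeeping, the only subtlety being the verification that ``thick subcategory generated'' is preserved under passing from $X \oplus Y$ to $X$ when $Y \in \add X$, which is immediate.
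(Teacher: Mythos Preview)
Your proof is correct. Part~(1) is essentially identical to the paper's argument: both reduce to showing $(a)\Rightarrow(c)$, invoke Lemma~\ref{l:presilting case} to dispose of $\homt(X,X[>1])$, and then use the triangle $R_1\to R_0\to X\xra{h}R_1[1]$ together with $\homt(R,X[1])=0$ to see that every map $X\to X[1]$ factors through $h$ and hence lies in $R[1](X,X[1])$.

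For part~(2) your route differs slightly from the paper's. The paper appeals to Corollary~\ref{c:number-direct-summand} (a consequence of the $\tau$-tilting bijection in Theorem~\ref{t:thm1}) to count indecomposable summands: once $|X|=|R|$, maximality of $X$ among $R[1]$-rigid objects, or conversely siltingness of $X$, follows immediately. You instead argue directly from the definition of maximality: in the forward direction you complete $X\oplus Z$ to a silting object and compare cardinalities with $X$; in the converse you complete $X$ to a silting object $X\oplus Y$, observe that $X\oplus Y$ is $R[1]$-rigid, and use maximality to absorb $Y$. Your version is a little more self-contained for this section since it avoids the dependence on Theorem~\ref{t:thm1}; the paper's version is shorter once that corollary is in hand. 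Both are fine.
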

\begin{proof}
For $(1)$, according to Lemma~\ref{l:presilting case}, it suffices to prove that each $R[1]$-rigid object is rigid.

Let us assume that $X$ is an $R[1]$-rigid object in $\opname{pr}(R)$. Then there exists a triangle 
\begin{equation}\label{tr:rigid}
R_1\to R_0\to X\xra{h} R_1[1]
\end{equation} with $R_0,R_1\in \add R$. By applying the functor $\homt(R,-)$ to the triangle \eqref{tr:rigid}, we obtain an exact sequence
\[\cdots\to \homt(R, R_0[1])\to \homt(R, X[1])\to \homt(R, R_1[2])\to \cdots.
\]
Since $R$ is a presilting object and $R_0,R_1\in \add R$, we have \[\homt(R, R_0[1])=0=\homt(R,R_1[2]).\] Consequently, $\homt(R,X[1])=0$.  Now applying the functor $\homt(-,X[1])$ to \eqref{tr:rigid}, we obtain an exact sequnece
\[\homt(R_1[1],X[1])\ra \homt(X,X[1])\ra 0.
\]
In other words, each morphism from $X$ to $X[1]$ factors through the morphism $h: X\to R_1[1]$. However, we have $R[1](X,X[1])=0$, which implies that $\homt(X,X[1])=0$ and hence $X$ is rigid. This completes the proof of $(1)$.

Now suppose that $R$ is a silting object. If $X$ is a silting object, then $X$ is an $R[1]$-rigid object by $(1)$. By Lemma \ref{l:number-direct-summands-silting}, we have $|X|=|R|$.  Hence, $X$ is a maximal $R[1]$-rigid object with respect to $\opname{pr}(R)$ by Corollary \ref{c:number-direct-summand}.

On the other hand, if $X$ is a   maximal $R[1]$-rigid object, then $X$ is a  presilting object by $(1)$. Since $X\in \opname{pr}(R)$,  $X$ is a partial silting object by Lemma \ref{l:2-term presilting is partial silting}. According to  Corollary \ref{c:number-direct-summand}, we know that  $|X|=|R|$. Therefore,  $X$ must be a silting object by Lemma \ref{l:number-direct-summands-silting}. 
\end{proof}

Combining Theorem~\ref{t:thm1} with Theorem~\ref{t:case d=infinite}, we obtain the following bijection, which is due to Iyama-J{\o}rgensen-Yang ({\it cf.}~\cite[Theorem~0.2]{IJY}).
\begin{corollary}~\label{c:IJY}
Let $R$ be a basic silting object of $\mt$ with endomorphism algebra $\Gamma=\End_\mt(R)$. There is a bijection between the set of presilting objects which belong to $\opname{pr}(R)$ and the set of $\tau$-rigid pair of $\Gamma$-modules, which induces a one-to-one correspondence between the set of  silting objects in $\opname{pr}(R)$ and the set of support $\tau$-tilting $\Gamma$-modules.
\end{corollary}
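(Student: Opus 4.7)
The plan is to simply concatenate the two bijections already at our disposal. Since $R$ is silting, it is in particular rigid, so Theorem~\ref{t:thm1} applies: the functor $\Hom_\mt(R,-)$ already gives a bijection between (isomorphism classes of) basic $R[1]$-rigid objects in $\opname{pr}(R)$ and basic $\tau$-rigid pairs of $\Gamma$-modules, and between basic maximal $R[1]$-rigid objects with respect to $\opname{pr}(R)$ and basic support $\tau$-tilting $\Gamma$-modules. So all that remains is to replace the word ``$R[1]$-rigid'' on the left-hand side of each bijection by the word ``(pre)silting''.

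First I would handle the presilting bijection. Theorem~\ref{t:case d=infinite}(1) applies to any presilting $R$ and asserts that for $X\in\opname{pr}(R)$, being $R[1]$-rigid, being rigid, and being presilting are equivalent. Since silting implies presilting, we may apply this: the basic $R[1]$-rigid objects in $\opname{pr}(R)$ are exactly the basic presilting objects lying in $\opname{pr}(R)$. Composing with Theorem~\ref{t:thm1}(b) gives the first desired bijection, and the bijection map is still $X\mapsto (\Hom_\mt(R,X_0),\Hom_\mt(R,R_X))$, where $X\cong X_0\oplus R_X[1]$ is the unique decomposition with $R_X\in\add R$ and $X_0$ having no summand in $\add R[1]$.

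Next I would promote this to the silting/support $\tau$-tilting level. Theorem~\ref{t:case d=infinite}(2) applies directly (since $R$ is silting by hypothesis) and identifies the maximal $R[1]$-rigid objects with respect to $\opname{pr}(R)$ with the silting objects of $\mt$ that lie in $\opname{pr}(R)$. Feeding this into Theorem~\ref{t:thm1}(c) yields the second bijection and shows it is the restriction of the first.

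There is no real obstacle here; the statement is a formal consequence of the two theorems above. The only thing to check explicitly is that the bijection of Theorem~\ref{t:thm1}(b) restricts correctly, i.e., that a basic presilting $X\in\opname{pr}(R)$ is silting if and only if the corresponding $\tau$-rigid pair is in fact a support $\tau$-tilting pair, and this is exactly the content of Theorem~\ref{t:case d=infinite}(2) combined with Theorem~\ref{t:thm1}(c). Hence the proof reduces to invoking these two results and observing that both bijections are induced by the same functor $\Hom_\mt(R,-)$.
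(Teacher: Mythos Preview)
Your proposal is correct and follows exactly the paper's own argument: the corollary is stated immediately after Theorem~\ref{t:case d=infinite} with the one-line justification ``Combining Theorem~\ref{t:thm1} with Theorem~\ref{t:case d=infinite}'', which is precisely the concatenation you spell out.
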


\section{R[1]-rigid objects and $d$-rigid objects in $(d+1)$-Calabi-Yau category}~\label{S:d-rigid}

 Let $d$ be a positive integer.  Throughout this section, we assume that $\mt$ is $(d+1)$-Calabi-Yau, \ie we are given bifunctorial isomorphisms \[\homt(X,Y)\cong\D\homt(Y,X[d+1]) ~\text{for}~ X,Y\in \mt,\]
where $\D=\Hom_k(-,k)$ is the usual duality over $k$.

\subsection{From $R[1]$-rigid objects to $d$-rigid objects}
An object $T\in\mt$ is called {\it $d$-rigid} if $\homt(T,T[i])=0$ for $i=1, 2, \cdots, d$. 
It is  {\it maximal $d$-rigid} if $T$ is $d$-rigid and for $i=1,\cdots,d, ~\homt(T\oplus X,(T\oplus X)[i])=0$  implies that $X\in\add T$.
An object $T\in \mt$ is a {\it $d$-cluster-tilting object}  if $T$ is $d$-rigid and for $i=1,\cdots,d$, $\homt(T, X[i])=0$ implies that $X\in \add T$.

 \begin{lem}~\label{l:rigid-d-rigid}
Let $R$ be a $d$-rigid object of $\mt$ and $X\in \opname{pr}(R)$.  Then $X$ is rigid if and only if $X$ is $d$-rigid.
\end{lem}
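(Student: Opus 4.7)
The ``if'' direction is trivial, as $d$-rigidity includes $1$-rigidity. For the other direction, assume $X$ is rigid and fix a presentation triangle
\[
R_1\to R_0\to X\to R_1[1]
\]
with $R_0, R_1\in \add R$ arising from $X\in \pr(R)$. The task is to establish the vanishing of $\homt(X, X[i])$ for every $i \in \{2,\ldots, d\}$, and the plan is to combine the two long exact sequences associated to this triangle with the Calabi-Yau duality.

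The first step pins down the mixed groups. Applying $\homt(R,-)$ to the triangle yields, for each $i$, an exact sequence
\[
\homt(R, R_0[i])\to \homt(R, X[i])\to \homt(R, R_1[i+1]).
\]
Since $R$ is $d$-rigid and $R_0, R_1\in \add R$, the left term vanishes for $1\le i\le d$ and the right for $0\le i\le d-1$, so $\homt(R, X[i]) = 0$ for $i \in \{1,\ldots, d-1\}$; equivalently $\homt(R_0, X[i]) = \homt(R_1, X[i]) = 0$ throughout this range. Applying $\homt(-, X[i])$ to the same triangle produces
\[
\homt(R_1[1], X[i])\to \homt(X, X[i])\to \homt(R_0, X[i]),
\]
and for $2\leq i\leq d-1$ both flanking terms vanish (the left being $\homt(R_1, X[i-1])$, controlled by the previous step at index $i-1$), whence $\homt(X, X[i]) = 0$ in this intermediate range.

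The remaining degree $i = d$ lies outside what the triangle alone controls, and this is precisely where the $(d+1)$-Calabi-Yau assumption enters: the bifunctorial isomorphism
\[
\homt(X, X[d]) \cong \D\homt(X, X[1])
\]
together with the rigidity of $X$ forces the vanishing. For $d = 1, 2$ the intermediate range is vacuous and only rigidity together with Serre duality is used. The only delicate bit is the book-keeping of index ranges so that the two long exact sequences dovetail; the top degree $i = d$ is genuinely inaccessible from the presentation triangle alone and is what motivates invoking the Calabi-Yau hypothesis.
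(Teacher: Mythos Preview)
Your proof is correct and follows essentially the same approach as the paper: apply $\homt(R,-)$ to the presentation triangle to get $\homt(R,X[i])=0$ for $1\le i\le d-1$, then apply $\homt(-,X[i])$ to obtain $\homt(X,X[i])=0$ for $2\le i\le d-1$, and finally invoke the $(d+1)$-Calabi-Yau duality to handle $i=d$. Your index bookkeeping is accurate and slightly more explicit than the paper's.
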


\begin{proof}
It is obvious that a $d$-rigid object is rigid.

Now suppose that $X$ is rigid. As $X\in \opname{pr}(R)$, we have a triangle 
\begin{equation}\label{eq: rigid-d-rigid}
R_1\xra{f}R_0\xra{g}X\xra{h}R_1[1]
\end{equation}
with $R_0,R_1\in \add R$. Note that we have $\homt(R,R[i])=0$ for $i=1,2,\cdots, d$.  Applying the functor $\homt(R, -)$ to the triangle \eqref{eq: rigid-d-rigid}, we obtain  a
long exact sequence
\[\cdots\to\homt(R, R_0[i])\to \homt(R, X[i])\to \homt(R,R_1[i+1])\to \cdots.
\]
Consequently, 
\begin{equation}\label{eq: relation}
\homt(R, X[i])=0, \quad i=1,\cdots, d-1.
\end{equation}
On the other hand, applying the functor $\homt(-, X[i])$ to the triangle \eqref{eq: rigid-d-rigid} yields a long exact sequence
\[\cdots\to \homt(R_1[1],X[i])\ra \homt(X,X[i])\ra \homt(R_0,X[i])\to \cdots.
\]
Then \eqref{eq: relation} implies that $\homt(X,X[i])=0$ for $i=2,\cdots, d-1$.

Recall that $X$ is rigid and $\mt$ is $(d+1)$-Calabi-Yau, we have 
\[\homt(X,X[d])\cong \D\homt(X,X[1])=0.
\]
Hence $X$ is a $d$-rigid object of $\mt$.
 \end{proof}
 
\begin{thm}\label{t:case d>1}
Let $R\in\mt$ be a $d$-rigid object and $X\in \opname{pr}(R)$. Then the followings are equivalent.
\begin{enumerate}
\item $X$ is an $R[1]$-rigid object.
\item $X$ is a rigid object.
\item $X$ is a $d$-rigid object. 
\end{enumerate}
\end{thm}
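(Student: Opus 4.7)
Since (2) $\Leftrightarrow$ (3) is the content of Lemma~\ref{l:rigid-d-rigid}, and (2) $\Rightarrow$ (1) is immediate from the inclusion $R[1](X,X[1]) \subseteq \Hom_\mt(X,X[1])$, the only substantial implication to establish is (1) $\Rightarrow$ (2): if $X \in \pr(R)$ is $R[1]$-rigid, then $\Hom_\mt(X,X[1])=0$. My plan is to mirror the argument used for the presilting case in Theorem~\ref{t:case d=infinite}(1), with the $d$-rigidity of $R$ now supplying the vanishings that presiltingness provided there.

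Concretely, fix a presentation triangle $R_1 \xra{f} R_0 \xra{g} X \xra{h} R_1[1]$ with $R_0, R_1 \in \add R$ witnessing $X \in \pr(R)$. The first step is to show $\Hom_\mt(R, X[1]) = 0$: applying $\Hom_\mt(R, -)$ to the triangle yields an exact sequence
\[\Hom_\mt(R, R_0[1]) \to \Hom_\mt(R, X[1]) \to \Hom_\mt(R, R_1[2]),\]
whose outer terms vanish because $R$ is $d$-rigid and both indices $i=1,2$ lie in the vanishing range $1 \leq i \leq d$. The second step is to apply $\Hom_\mt(-, X[1])$ to the same triangle, producing
\[\Hom_\mt(R_1[1], X[1]) \xra{h^*} \Hom_\mt(X, X[1]) \to \Hom_\mt(R_0, X[1]),\]
in which the right-hand term vanishes by the previous step, so $h^*$ is surjective. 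Every morphism in the image of $h^*$ is a composition $X \xra{h} R_1[1] \to X[1]$, hence factors through $R_1[1] \in \add R[1]$ and belongs to $R[1](X, X[1])$. Since $X$ is $R[1]$-rigid by assumption, this group is zero, and we conclude $\Hom_\mt(X, X[1]) = 0$, i.e., $X$ is rigid.

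The proof is essentially mechanical once the correct triangle is in place, so there is no real obstacle; the structural content is the observation that the presentation triangle together with the degree-$1$ and degree-$2$ vanishings forced by $d$-rigidity are exactly enough to upgrade $R[1]$-rigidity to genuine rigidity. It is worth noting that the $(d+1)$-Calabi-Yau hypothesis on $\mt$ plays no direct role in the (1) $\Rightarrow$ (2) step itself — it enters only indirectly through Lemma~\ref{l:rigid-d-rigid}, where it is needed to interchange ``rigid'' with ``$d$-rigid''.
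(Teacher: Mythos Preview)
Your argument is correct and matches the paper's proof verbatim for $d\geq 2$, but it breaks down when $d=1$. The key step where you conclude $\Hom_\mt(R,X[1])=0$ requires both $\Hom_\mt(R,R_0[1])=0$ and $\Hom_\mt(R,R_1[2])=0$; the second vanishing needs $2\leq d$. When $d=1$ the object $R$ is only assumed to be rigid, and in a $2$-Calabi--Yau category $\Hom_\mt(R,R[2])\cong \D\Hom_\mt(R,R)$ is typically nonzero, so your exact sequence tells you nothing about $\Hom_\mt(R,X[1])$. Consequently your claim that the $(d+1)$-Calabi--Yau hypothesis plays no role in (1) $\Rightarrow$ (2) is also incorrect in this boundary case.

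The paper treats $d=1$ separately. From the long exact sequence for $\Hom_\mt(-,X[1])$ applied to the presentation triangle, the $R[1]$-rigidity of $X$ forces the image of $\Hom_\mt(h,X[1])$ to be zero, hence $\Hom_\mt(g,X[1]):\Hom_\mt(X,X[1])\to\Hom_\mt(R_0,X[1])$ is injective. Dualizing via the $2$-Calabi--Yau isomorphisms shows that $\Hom_\mt(X,g[1]):\Hom_\mt(X,R_0[1])\to\Hom_\mt(X,X[1])$ is surjective. But every morphism in the image of $\Hom_\mt(X,g[1])$ factors through $R_0[1]\in\add R[1]$, so $R[1]$-rigidity again gives $\Hom_\mt(X,g[1])=0$, whence $\Hom_\mt(X,X[1])=0$. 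You should add this case to complete the proof.
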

\begin{proof}
 According to Lemma~\ref{l:rigid-d-rigid}, it suffices to prove that each $R[1]$-rigid object is rigid. 

 Let us first consider the case that $d\geq 2$. Applying the functor $\homt(R,-)$ to the triangle \eqref{eq: rigid-d-rigid} yields a long exact sequence
 \[\cdots\to \homt(R, R_0[1])\to \homt(R, X[1])\to \homt(R, R_1[2])\to \cdots.
 \]
 As $R$ is $d$-rigid and $R_0,R_1\in \add R$, we conclude that $\homt(R,X[1])=0$.
 Applying the functor $\homt(-,X[1])$ to the triangle \eqref{eq: rigid-d-rigid}, we obtain an exact sequence
 \[\homt(R_1[1],X[1])\xrightarrow{\homt(h, X[1])} \homt(X,X[1])\ra 0.
 \]
In particular, each morphism from $X$ to $X[1]$ factors through the morphism $h:X\to R_1[1]$. Hence the assumption that $X$ is an $R[1]$-rigid object implies that $X$ is rigid.

Now suppose that $d=1$. In this case, $\mt$ is a $2$-Calabi-Yau triangulated category.  Applying the functor $\homt(-,X[1])$ to \eqref{eq: rigid-d-rigid} yields a long exact sequence
\[\cdots\to \homt(R_1[1],X[1])\xra{\homt(h,X[1])}\homt(X,X[1])\xra{\homt(g, X[1])}\homt(R_0,X[1])\to \cdots.
\]
Then the assumption that $X$ is $R[1]$-rigid implies that $\homt(g,X[1])$ is injective. Consequently, the morphism 
\[\D\homt(g,X[1]): \D\homt(R_0,X[1])\to \D\homt(X,X[1])
\]
is surjective.  Thanks to the $2$-Calabi-Yau property, we have the following commutative diagram
\[\xymatrix{\D\homt(R_0,X[1])\ar[d]^{\cong}\ar[rr]^{\D\homt(g,X[1])}&&\D\homt(X,X[1])\ar[d]^{\cong}\\
\homt(X,R_0[1])\ar[rr]^{\homt(X,g[1])}&&\homt(X,X[1]).
}
\]
In particular, $\homt(X,g[1]):\homt(X,R_0[1])\to \homt(X,X[1])$ is surjective.  Again, $R[1](X,X[1])=0$ implies that $\homt(X,g[1])=0$ and then $\homt(X,X[1])=0$. 
\end{proof}

\subsection{$d$-cluster-tilting objects in $d$-cluster categories}
This subsection concentrates on $d$-cluster categories,  a special class of $(d+1)$-Calabi-Yau triangulated categories. We refer to \cite{K, T} for definitions  and ~\cite{ZZ09,W} for basic properties of $d$-cluster categories. Among others, the following result proved in \cite{ZZ09,W} is useful.

\begin{lem}~\label{l:d cluster tilting objects are equivalent to maximal rigid object}
Let $\mt$ be a $d$-cluster category. Then an object $T$ is a  $d$-cluster tilting object if and only if $T$ is a maximal $d$-rigid objects. Moreover, all $d$-cluster tilting objects have the same number of non-isomorphic indecomposable direct summands.
\end{lem}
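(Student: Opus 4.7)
The statement splits into two assertions: (i) $T$ is $d$-cluster tilting iff $T$ is maximal $d$-rigid, and (ii) all basic $d$-cluster tilting objects have the same number of non-isomorphic indecomposable summands.

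For the forward direction of (i), the plan is a direct check from the definitions: if $T$ is $d$-cluster tilting and $\homt(T\oplus X,(T\oplus X)[i])=0$ for $i=1,\ldots,d$, then in particular $\homt(T,X[i])=0$ for these $i$, which forces $X\in\add T$. For the converse, fix a maximal $d$-rigid $T$ and an object $X$ with $\homt(T,X[i])=0$ for $i=1,\ldots,d$. Setting $j=d+1-i$, the $(d+1)$-Calabi--Yau duality rewrites this as the symmetric vanishing $\homt(X,T[j])=0$ for $j=1,\ldots,d$. Therefore, once $\homt(X,X[i])=0$ for $i=1,\ldots,d$ is known, the object $T\oplus X$ will be $d$-rigid and maximality will yield $X\in\add T$. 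The task thus reduces to proving $X$ is itself $d$-rigid.

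The natural route is to first place $X$ in $\pr(T)$ and then invoke Theorem~\ref{t:case d>1}. Concretely, I would use the orbit-category description of a $d$-cluster category to construct a right $\add T$-approximation triangle $T_1\to T_0\to X\to T_1[1]$ with $T_0,T_1\in\add T$ (so $X\in\pr(T)$); here one relies on the genuine structure of a $d$-cluster category, not only on its $(d+1)$-CY property. With $X\in\pr(T)$ in hand, Theorem~\ref{t:case d>1} identifies $d$-rigidity of $X$ with $T[1]$-rigidity, and a diagram chase along the approximation triangle analogous to Lemma~\ref{l:R[1]-rigid}, together with $\homt(T,X[1])=0$, supplies the latter. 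The main obstacle I anticipate is precisely the step $X\in\pr(T)$: in the absence of the cluster tilting hypothesis on $T$, this requires invoking the fundamental domain for the orbit construction of $\mt$ rather than any abstract property used in Section~\ref{S:d-rigid}.

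For (ii), the plan is to use $d$-exchange triangles. For any basic $d$-cluster tilting object $T=T_k\oplus \bar T$ and any indecomposable summand $T_k$, one constructs a unique non-isomorphic indecomposable complement $T_k^{\ast}$ such that $T^{\ast}=T_k^{\ast}\oplus \bar T$ is again basic $d$-cluster tilting; this single-summand mutation manifestly preserves the cardinality of indecomposable summands. Since the exchange graph of basic $d$-cluster tilting objects in a $d$-cluster category is connected (once more via the orbit description), any two such objects are joined by a finite chain of such mutations, and the invariance of the summand count follows. The delicate point here is the existence and uniqueness of the complement $T_k^{\ast}$, for which the $(d+1)$-CY property and the hereditary nature underlying $\mt$ are both essential inputs.
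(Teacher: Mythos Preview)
The paper does not give its own proof of this lemma: it is quoted verbatim as a known result from \cite{ZZ09,W} (see the sentence immediately preceding the statement). There is therefore no in-paper argument to compare your sketch against; the authors simply import the lemma as input.

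Regarding your sketch itself: the forward direction of (i) is immediate as you say. For the converse, your reduction is sound, and once $X\in\pr(T)$ is known the conclusion does follow---indeed $\homt(X,T[1])\cong\D\homt(T,X[d])=0$ already forces $T[1](X,X[1])=0$, so Theorem~\ref{t:case d>1} applies. The real gap is precisely the step you flag: obtaining $X\in\pr(T)$ when $T$ is only assumed maximal $d$-rigid. The orbit-category description does not hand you an $\add T$-approximation triangle for arbitrary $X$ without further work; in fact, controlling which objects lie in $\pr(T)$ and relating maximal $d$-rigid to $d$-cluster tilting is essentially the content of \cite{ZZ09,W}, not a preliminary to it. So your plan defers the substantive difficulty to an unproved claim.

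For (ii), the mutation-plus-connectedness argument is valid in principle, but connectedness of the exchange graph of $d$-cluster tilting objects in a $d$-cluster category is itself a nontrivial theorem (also established in \cite{ZZ09}) and is arguably harder than the summand count you want. The cited references instead obtain the invariance more directly, identifying the number of indecomposable summands with the number of simple modules of the underlying hereditary algebra via the fundamental-domain description.
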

Then for relative rigid objects, we have the following.
\begin{proposition}\label{p:case d>1-2}
Let $\mt$ be a $d$-cluster category and $R$ be a $d$-cluster tilting object in $\mt$. Assume $X\in \opname{pr}(R)$, then  $X$ is a maximal $R[1]$-rigid object with respect to $\opname{pr}(R)$ if and only if $X$ is a $d$-cluster tilting object.
\end{proposition}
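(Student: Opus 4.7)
The plan is to leverage Theorem~\ref{t:case d>1} (which identifies $R[1]$-rigidity with $d$-rigidity inside $\opname{pr}(R)$) together with Lemma~\ref{l:d cluster tilting objects are equivalent to maximal rigid object} (in a $d$-cluster category, $d$-cluster tilting objects are precisely the maximal $d$-rigid objects, all having the same number of non-isomorphic indecomposable summands). The ``if'' direction will then reduce to a direct translation, while the ``only if'' direction will reduce to counting indecomposables after completing $X$ to a $d$-cluster tilting object.

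For the ``if'' direction, suppose $X$ is a $d$-cluster tilting object. Then $X$ is $d$-rigid, so Theorem~\ref{t:case d>1} gives that $X$ is $R[1]$-rigid. To establish maximality with respect to $\opname{pr}(R)$, I would take any $Z\in\opname{pr}(R)$ with $X\oplus Z$ being $R[1]$-rigid; Theorem~\ref{t:case d>1} upgrades this to $d$-rigidity of $X\oplus Z$, so in particular $\homt(X,Z[i])=0$ for $i=1,\ldots,d$, and the $d$-cluster tilting property of $X$ forces $Z\in\add X$.

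For the ``only if'' direction, Theorem~\ref{t:case d>1} first yields that $X$ is $d$-rigid. A preliminary observation is that $R$ itself is maximal $R[1]$-rigid with respect to $\opname{pr}(R)$: if $Z\in\opname{pr}(R)$ makes $R\oplus Z$ an $R[1]$-rigid object, then $R\oplus Z$ is $d$-rigid by Theorem~\ref{t:case d>1}, and the $d$-cluster tilting property of $R$ yields $Z\in\add R$. Corollary~\ref{c:number-direct-summand} then forces $|X|=|R|$.

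To conclude, I would invoke the standard completion principle for $d$-cluster categories (see~\cite{ZZ09,W}): every $d$-rigid object is a direct summand of some $d$-cluster tilting object. Writing $X\oplus X'$ for such a completion of $X$, Lemma~\ref{l:d cluster tilting objects are equivalent to maximal rigid object} gives $|X\oplus X'|=|R|=|X|$, which forces $X'\in\add X$, so $X$ is itself $d$-cluster tilting. The genuine content lies in this last step: without the completion result one could not pass from the local maximality of $X$ inside $\opname{pr}(R)$ to the global $d$-cluster tilting property in $\mt$. Everything else is bookkeeping matching the $R[1]$-rigid maximality constraint against the $d$-rigid one via Theorem~\ref{t:case d>1}.
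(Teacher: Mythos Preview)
Your proof is correct and follows essentially the same strategy as the paper: both directions hinge on Theorem~\ref{t:case d>1} to pass between $R[1]$-rigidity and $d$-rigidity inside $\opname{pr}(R)$, on Corollary~\ref{c:number-direct-summand} for the count $|X|=|R|$, and on Lemma~\ref{l:d cluster tilting objects are equivalent to maximal rigid object}. There are two small differences worth noting. For the ``if'' direction you argue directly (given $Z\in\opname{pr}(R)$ with $X\oplus Z$ being $R[1]$-rigid, upgrade to $d$-rigid and invoke the $d$-cluster tilting property of $X$), whereas the paper instead uses the summand count $|X|=|R|$ together with Corollary~\ref{c:number-direct-summand}; your argument is slightly more self-contained. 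For the ``only if'' direction you make explicit two points the paper leaves implicit: that $R$ itself is maximal $R[1]$-rigid with respect to $\opname{pr}(R)$ (so that Corollary~\ref{c:number-direct-summand} really yields $|X|=|R|$), and that one needs a completion of $X$ to a $d$-cluster tilting object to pass from ``$d$-rigid with $|X|=|R|$'' to ``maximal $d$-rigid''. The paper's sentence ``$X$ is $d$-rigid and hence a maximal $d$-rigid object'' is doing exactly this, but tacitly; your version is more careful.
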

\begin{proof}
Let $X$ be a maximal $R[1]$-rigid object with respect to $\opname{pr}(R)$.  By Corollary \ref{c:number-direct-summand}, we have $|X|=|R|$. According to Theorem \ref{t:case d>1},   $X$ is $d$-rigid and hence a maximal $d$-rigid object in $\mt$. 
It follows from Lemma \ref{l:d cluster tilting objects are equivalent to maximal rigid object} that
$X$ is a $d$-cluster tilting object in $\mt$.

Conversely, assume that $X$ is a $d$-cluster tilting object. According to Lemma~\ref{l:d cluster tilting objects are equivalent to maximal rigid object}, $X$ is a maximal $d$-rigid object with $|X|=|R|$ and hence a maximal $R[1]$-rigid object respect to $\opname{pr}(R)$ by Theorem~\ref{t:case d>1}.
\end{proof}

Combining Theorem~\ref{t:thm1}, Thereom~\ref{t:case d>1} with Proposition~\ref{p:case d>1-2}, we obtain the following main result of~\cite{LQX}.
\begin{corollary} ~\label{c:LQX}
Assume that $d\ge 2$. Let $\mt$ be a $d$-cluster category with a $d$-cluster-tilting object $R$. Denote by $\Gamma=\End_\mt(R)$ the endomorphism algebra of $R$.
The functor $\homt(R,-)$ yields a bijection between the set of isomorphism classes of $d$-rigid objects of $\mt$ which belong to $\opname{pr}(R)$ and the set of isomorphism classes of $\tau$-rigid $\ga$-modules. The bijection induces a bijection between the set of isomorphism classes of $d$-cluster tilting objects of $\mt$ which belong to $\opname{pr}(R)$ and  the set of isomorphism classes of support $\tau$-tilting $\ga$-modules.
\end{corollary}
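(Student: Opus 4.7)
The strategy is to chain together the three earlier results so that nothing new has to be proved: Theorem~\ref{t:thm1} supplies the bijection at the level of $R[1]$-rigid objects, and Theorem~\ref{t:case d>1} together with Proposition~\ref{p:case d>1-2} identifies the $R[1]$-rigid (respectively maximal $R[1]$-rigid) objects in $\opname{pr}(R)$ with the $d$-rigid (respectively $d$-cluster tilting) objects in $\opname{pr}(R)$. The hypothesis $d\geq 2$ is used exclusively through the fact that the $d$-cluster-tilting object $R$ is $d$-rigid, so that Theorem~\ref{t:case d>1} and Lemma~\ref{l:d cluster tilting objects are equivalent to maximal rigid object} both apply, and through the standing $(d+1)$-Calabi--Yau property of a $d$-cluster category.

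For the first bijection, I would first fix any $X\in\opname{pr}(R)$ and invoke Theorem~\ref{t:case d>1} to observe that being $R[1]$-rigid, being rigid, and being $d$-rigid are equivalent conditions, because the hypothesis of that theorem—namely that $R$ is $d$-rigid—is automatic for a $d$-cluster-tilting object. Hence the set of (isomorphism classes of) $d$-rigid objects belonging to $\opname{pr}(R)$ coincides with the set of $R[1]$-rigid objects in $\opname{pr}(R)$. Theorem~\ref{t:thm1}(b) then transports this set, via $\Hom_{\mt}(R,-)$, to the set of basic $\tau$-rigid pairs of $\Gamma$-modules, which is the statement sought (absorbing the summands in $\add R[1]$ into the projective component of the pair, exactly as in the proof of Theorem~\ref{t:thm1}(b)).

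For the second bijection, I would apply Proposition~\ref{p:case d>1-2} to identify the $d$-cluster tilting objects of $\mt$ lying in $\opname{pr}(R)$ with the maximal $R[1]$-rigid objects with respect to $\opname{pr}(R)$, and then feed this identification into Theorem~\ref{t:thm1}(c) to obtain the bijection with support $\tau$-tilting $\Gamma$-modules. The compatibility with the previous bijection is automatic, since both bijections are realized by the same functor $\Hom_{\mt}(R,-)$, so the first bijection automatically restricts to the second one on the sub-classes of maximal objects.

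There is no genuine obstacle in this proof—everything is a bookkeeping combination of earlier statements. The only point demanding care is conceptual rather than technical: one must recognize that the language of \emph{$\tau$-rigid modules} used in the conclusion is shorthand for \emph{$\tau$-rigid pairs}, the extra projective direct summand of the pair corresponding exactly to the direct summands of $X$ lying in $\add R[1]$, as was already set up in the proof of Theorem~\ref{t:thm1}(b). Once this convention is fixed, the corollary follows immediately from Theorems~\ref{t:thm1} and~\ref{t:case d>1} together with Proposition~\ref{p:case d>1-2}.
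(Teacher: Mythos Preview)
Your proposal is correct and follows exactly the route the paper takes: the corollary is stated immediately after Proposition~\ref{p:case d>1-2} with the single line ``Combining Theorem~\ref{t:thm1}, Theorem~\ref{t:case d>1} with Proposition~\ref{p:case d>1-2}\dots'', and no further argument is given. Your observation that ``$\tau$-rigid modules'' in the conclusion is really shorthand for ``$\tau$-rigid pairs'' (with the projective part coming from summands in $\add R[1]$) is the only subtlety, and you handle it correctly.
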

\subsection{Maximal rigid objects in $2$-Calabi-Yau categories}
In this subsection, we assume that $\mt$ is a $2$-Calabi-Yau category and $R$ a basic maximal rigid object of $\mt$.
It has been proved in~\cite{BIRS, ZZ11} that each rigid object of $\mt$ belongs to $\opname{pr}(R)$.

The following proposition is a direct consequence of Theorem~\ref{t:case d>1} and Corollary~\ref{c:number-direct-summand}.
\begin{proposition}\label{p:case d=1}
Let $\mt$ be a $2$-Calabi-Yau triangulated category and $R$ a basic maximal rigid object of $\mt$.  Let $X$ be an object of $\mt$,  then $X$ is a maximal $R[1]$-rigid object with respect to $\opname{pr}(R)$ if and only if $X$ is a maximal rigid object of $\mt$.
\end{proposition}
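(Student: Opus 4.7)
The plan is to deduce the proposition directly from Theorem~\ref{t:case d>1} (applied with $d=1$) together with the key input, already quoted in the subsection, that in a $2$-Calabi-Yau triangulated category with a basic maximal rigid object $R$ every rigid object of $\mt$ belongs to $\opname{pr}(R)$ (see~\cite{BIRS, ZZ11}). Theorem~\ref{t:case d>1} says that inside $\opname{pr}(R)$ the three conditions \emph{$R[1]$-rigid}, \emph{rigid}, and \emph{$d$-rigid} coincide, so the task reduces to transferring the maximality statement across this equivalence, and the BIRS/ZZ inclusion is exactly what lets us drop the auxiliary subcategory $\opname{pr}(R)$.

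For the forward direction, suppose $X$ is a basic maximal $R[1]$-rigid object with respect to $\opname{pr}(R)$. Then $X\in \opname{pr}(R)$ and, by Theorem~\ref{t:case d>1}, $X$ is rigid. To show $X$ is maximal rigid in $\mt$, take any $Z\in\mt$ with $\homt(X\oplus Z, (X\oplus Z)[1])=0$. Since $X\oplus Z$ is rigid, the BIRS/ZZ result places $X\oplus Z$, and hence $Z$, in $\opname{pr}(R)$. Applying Theorem~\ref{t:case d>1} again (in the other direction), $X\oplus Z$ is $R[1]$-rigid, so in particular $R[1](X\oplus Z,(X\oplus Z)[1])=0$. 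Maximality of $X$ with respect to $\opname{pr}(R)$ now forces $Z\in\add X$.

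For the converse, assume $X$ is a basic maximal rigid object of $\mt$. By BIRS/ZZ we have $X\in\opname{pr}(R)$, and by Theorem~\ref{t:case d>1} $X$ is $R[1]$-rigid. If $Z\in\opname{pr}(R)$ satisfies $R[1](X\oplus Z,(X\oplus Z)[1])=0$, then $X\oplus Z\in\opname{pr}(R)$ is $R[1]$-rigid, hence rigid by Theorem~\ref{t:case d>1}, so maximal rigidity of $X$ in $\mt$ gives $Z\in\add X$.

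The only point that requires a little care is the appeal to the BIRS/ZZ result, which is the essential $2$-Calabi-Yau ingredient that has no analogue in the higher-$d$ case treated by Proposition~\ref{p:case d>1-2}; without it one could not replace the relative notion \emph{maximal with respect to $\opname{pr}(R)$} by the absolute notion \emph{maximal rigid in $\mt$}. Corollary~\ref{c:number-direct-summand} is not strictly needed in the argument as written, but it provides a sanity check: applying it to the (maximal $R[1]$-rigid) object $R$ itself shows $|X|=|R|$, which is consistent with the known fact that all basic maximal rigid objects in a $2$-Calabi-Yau category have the same number of indecomposable summands.
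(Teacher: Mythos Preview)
Your argument is correct. The only point to note is that it differs slightly from the paper's one-line proof, which cites Theorem~\ref{t:case d>1} together with Corollary~\ref{c:number-direct-summand}: the intended route there is a counting argument (a basic maximal $R[1]$-rigid object has $|R|$ indecomposable summands, so once it is shown to be rigid it must already be maximal rigid, and conversely). You instead transfer maximality directly through the equivalence rigid $\Leftrightarrow$ $R[1]$-rigid on $\opname{pr}(R)$, using the BIRS/ZZ inclusion to pull arbitrary rigid test objects $Z$ into $\opname{pr}(R)$. Both approaches rely on the BIRS/ZZ input quoted just before the proposition; your version is arguably cleaner in that it avoids the detour through summand counts, and your remark that Corollary~\ref{c:number-direct-summand} is a sanity check rather than a logical necessity is accurate.
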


Combining Theorem~\ref{t:thm1} with Proposition~\ref{p:case d=1}, we obtain the main result of ~\cite{CZZ,LX}.
\begin{corollary}~\label{c:CZZ-LX}
Let $\mt$ be a $2$-Calabi-Yau category with a basic maximal rigid object $R$. Denote by $\Gamma=\End_\mt(R)$ the endomorphism algebra of $R$.
Then there is a bijection between the set of isomorphism classes of rigid objects of $\mt$ and the set of isomorphism classes of $\tau$-rigid $\ga$-modules, which induces a bijection between the set of isomorphism classes of maximal rigid objects of $\mt$ and the set of isomorphism classes of support $\tau$-tilting $\ga$-modules.
\end{corollary}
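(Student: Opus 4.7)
The plan is to derive Corollary~\ref{c:CZZ-LX} as a direct combination of Theorem~\ref{t:thm1} and Proposition~\ref{p:case d=1}, using the basic fact (due to \cite{BIRS,ZZ11} and cited in the paragraph just before Proposition~\ref{p:case d=1}) that in a $2$-Calabi-Yau category with a basic maximal rigid object $R$, every rigid object of $\mt$ belongs to $\opname{pr}(R)$. This last fact is what lets us replace ``rigid object in $\opname{pr}(R)$'' by the unqualified ``rigid object of $\mt$'' throughout.

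First I would record the identification of the relevant classes of objects. By Theorem~\ref{t:case d>1} applied in the case $d=1$, an object $X\in\opname{pr}(R)$ is rigid if and only if it is $R[1]$-rigid. Combined with the BIRS/ZZ inclusion, this shows that the set of rigid objects of $\mt$ coincides with the set of $R[1]$-rigid objects in $\opname{pr}(R)$. Then Proposition~\ref{p:case d=1} says that, under the same identification, the maximal rigid objects of $\mt$ correspond precisely to the maximal $R[1]$-rigid objects with respect to $\opname{pr}(R)$.

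Second, I would transport these two identifications through the functor $\Hom_\mt(R,-)$ using Theorem~\ref{t:thm1}. Part (b) of that theorem yields a bijection between basic $R[1]$-rigid objects in $\opname{pr}(R)$ and basic $\tau$-rigid pairs of $\Gamma$-modules; since forgetting the projective part and remembering that each basic rigid $X$ decomposes as $X_0\oplus R_X[1]$ with $R_X\in\add R$ recovers the $\tau$-rigid module together with its ``support'' datum, this translates into the asserted bijection with $\tau$-rigid $\Gamma$-modules. Part (c) of Theorem~\ref{t:thm1}, combined with Proposition~\ref{p:case d=1}, then gives the induced bijection between basic maximal rigid objects of $\mt$ and basic support $\tau$-tilting $\Gamma$-modules.

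The only real subtlety, and thus the main obstacle, is the mild bookkeeping between ``rigid object'' and ``$\tau$-rigid pair'': a rigid object of $\mt$ can have direct summands in $\add R[1]$, and these correspond to the projective component $P$ of the pair $(M,P)$. I would therefore emphasize in the proof that the composite bijection is $X=X_0\oplus R_X[1]\mapsto (\Hom_\mt(R,X_0),\Hom_\mt(R,R_X))$, exactly as in the proof of Theorem~\ref{t:thm1}(b), so that the two-line statement of Corollary~\ref{c:CZZ-LX} is a formal consequence of what has already been established in Section~\ref{S:basic-notions} together with Theorem~\ref{t:case d>1} and Proposition~\ref{p:case d=1}; no further argument is required.
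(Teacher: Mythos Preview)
Your proposal is correct and follows exactly the approach the paper takes: the paper simply asserts that the corollary is obtained by combining Theorem~\ref{t:thm1} with Proposition~\ref{p:case d=1} (implicitly using Theorem~\ref{t:case d>1} for $d=1$ and the cited fact from \cite{BIRS,ZZ11} that every rigid object lies in $\opname{pr}(R)$). Your additional remark about the pair-versus-module bookkeeping is a helpful clarification of a point the paper leaves implicit.
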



\begin{thebibliography}{AAAA}

\bibitem[AIR14]{AIR}
T. Adachi, O. Iyama, and I.  Reiten, \emph{$\tau$-tilting theory}, Compos. Math. \textbf{150}(3)(2014), 415-452.

\bibitem[A13]{A}
T. Aihara,  \emph{Tilting-Connected Symmetric Algebras}, Algebra Representation Theory \textbf{16}(2013), 873-894.

\bibitem[AI12]{AI}
T. Aihara and O. Iyama, \emph{Silting mutation in triangulated categories},  J. London Math. Soc. \textbf{85}(3)(2012), 633-668.
\bibitem[BBT14]{BBT}
L. Beaudet, T. Br\"{u}stle and G. Todorov, \emph{ Projective dimension of modules over cluster-tilted algebras}, Algebra Represent. Theory \textbf{17}(2014), 1797-1807.

\bibitem[BIRS09] {BIRS} 
A. Buan, O. Iyama, I. Reiten and J. Scott, \emph{Cluster structure for 2-Calabi-Yau categories and unipotent groups}, Compos. Math. \textbf{145} (4)(2009),  1035-1079.

\bibitem[CZZ15] {CZZ}
W. Chang, J. Zhang and B. Zhu, \emph{ On support $\tau$-tilting modules over endomorphism algebras of rigid objects}, 
Acta Math. Sin. (Engl. Ser.) \textbf{31}(9)(2015), 1508-1516.




\bibitem[IJY14]{IJY}
O. Iyama, P. J{\o}rgensen, and D. Yang, \emph{ Intermediate co-t-structures, two- term silting objects, $\tau$-tilting modules, and torsion classes},  Algebra and Number Theory \textbf{ 8}(10)(2014), 2413-2431.

\bibitem[IY08]{IY}
O. Iyama and Y. Yoshino, \emph{Mutation in triangulated categories and rigid Cohen-Macaulay modules}, Invent. Math. \textbf{172} (2008), 117-168. 

\bibitem[K05]{K}
B. Keller, \emph{On triangulated orbit categories}, Doc. Math. \textbf{10}(2005), 551-581.

\bibitem[KR07]{KR} 
B. Keller and  I. Reiten, \emph{ Cluster-tilted algebras are Gorenstein and stably Calabi-Yau},  Adv. Math. \textbf{211} (2007), 123-151.
\bibitem[KZ08]{KZ}
 S. Koenig and B. Zhu, \emph{From triangulated categories to abelian categories: cluster tilting in a general framework}, Math. Z. \textbf{258} (2008), 143-160.
 
\bibitem[LQX]{LQX}
P. Liu, X. Qiu and Y. Xie,  \emph{On higher cluster tilted algebras}, preprint.
 
\bibitem[LX14]{LX2014}
P. Liu and Y. Xie,  \emph{On modules over endomorphism algebras of maximal rigid objets in 2-Calabi-Yau triangulated categories}, 
Comm. in Alg. \textbf{42}(10)(2014), 4296-4307.
\bibitem[LX16]{LX}
 P. Liu and Y. Xie,  \emph{On the relation between maximal rigid objects and $\tau$-tilting modules}, 
Colloq. Math. \textbf{142}(2)(2016), 169-178.


\bibitem[T07]{T}
H. Thomas, \emph{Defining an m-cluster category}, J. Algebra \textbf{318}(2007), 37-46.



\bibitem[W09]{W}
A. Wralsen, \emph{Rigid objects in higher cluster categories}, J. Algebra \textbf{321}(2009), 532-547.

\bibitem[YZZ17]{YZZ}
W. Yang, P. Zhou and B. Zhu, \emph{Triangulated categories with cluster-tilting subcategories}, preprint, arXiv:1711.04290.

\bibitem[YZ15] {YZ}
W. Yang and B. Zhu, \emph{Relative cluster tilting objects in triangulated categories},  arXiv:1504.00093,  2015,  to
appear in Trans. Amer. Math. Soc.

\bibitem[ZZ09]{ZZ09} 
Y. Zhou and B, Zhu, \emph{Cluster combinatorics of d-cluster categories}, J. Algebra \textbf{321}(2009), 2898-2915.

\bibitem[ZZ11] {ZZ11}
Y. Zhou and B. Zhu, \emph{Maximal rigid subcategories in 2-Calabi-Yau triangulated categories},  J. Algebra \textbf{348} (2011), 49-60.


\end{thebibliography}
\end{document}